\definecolor{myblue}{rgb}{0,0,0.6}
\definecolor{lightGray}{RGB}{198,198,198}
\DeclareMathAlphabet{\mathpzc}{OT1}{pzc}{m}{it}
\newtheorem{theorem}{Theorem}[section]
\newtheorem{corollary}[theorem]{Corollary}
\newtheorem{definition}[theorem]{Definition}
\newenvironment{proof}[1][Proof]{\noindent \emph{#1.} }
{\hfill \ \rule{0.5em}{0.5em}}
\newtheorem{lemma}[theorem]{Lemma}
\newtheorem{proposition}[theorem]{Proposition}
\newtheorem{assumption}[theorem]{Assumption}
\numberwithin{equation}{section}
\numberwithin{table}{section}
\numberwithin{figure}{section}
\newtheorem{remark}[theorem]{Remark}
\newtheorem{example}[theorem]{Example}
\newcommand{\nc}{\newcommand}
\newcommand\RedeclareMathOperator{%
  \@ifstar{\def\rmo@s{m}\rmo@redeclare}{\def\rmo@s{o}\rmo@redeclare}%
}
\newcommand\rmo@redeclare[2]{%
  \begingroup \escapechar\m@ne\xdef\@gtempa{{\string#1}}\endgroup
  \expandafter\@ifundefined\@gtempa
     {\@latex@error{\noexpand#1undefined}\@ehc}%
     \relax
  \expandafter\rmo@declmathop\rmo@s{#1}{#2}}
\newcommand\rmo@declmathop[3]{%
  \DeclareRobustCommand{#2}{\qopname\newmcodes@#1{#3}}%
}
\nc{\bfx}{\mathbf{x}} 
\nc{\bfy}{\mathbf{y}} 
\nc{\bfz}{\mathbf{z}} 
\nc{\bfu}{\mathbf{u}} 
\nc{\bfv}{\mathbf{v}} 
\nc{\bfw}{\mathbf{w}} 
\nc{\bft}{\mathbf{t}} 
\nc{\bfb}{\mathbf{b}} 
\nc{\bfn}{\mathbf{n}} 
\nc{\bfr}{\mathbf{r}} 
\nc{\bfc}{\mathbf{c}} 
\nc{\bfA}{\mathbf{A}} 
\nc{\bfB}{\mathbf{B}} 
\nc{\bfC}{\mathbf{C}} 
\nc{\bfR}{\mathbf{R}} 
\nc{\bfD}{\mathbf{D}} 
\nc{\bfI}{\mathbf{I}} 
\nc{\bfM}{\mathbf{M}} 
\nc{\bfK}{\mathbf{K}} 
\nc{\bfP}{\mathbf{P}} 
\nc{\bfU}{\mathbf{U}} 
\nc{\bfZ}{\mathbf{Z}} 
\nc{\bfV}{\mathbf{V}} 
\nc{\bfE}{\mathbf{E}} %
\nc{\bfH}{\mathbf{H}} %
\nc{\bfX}{\mathbf{X}} %
\nc{\bfId}{\mathbf{I}_{\mathrm{d}}} 
\nc{\bbN}{\mathbb{N}} 
\nc{\bbR}{\mathbb{R}} 
\nc{\bbP}{\mathbb{P}} 
\nc{\bbC}{\mathbb{C}} 
\nc{\wH}{\widetilde{H}}
\nc{\calS}{\mathcal{S}} %
\nc{\calD}{\mathcal{D}} %
\nc{\calN}{\mathcal{N}} %
\nc{\calT}{\mathcal{T}} %
\nc{\calR}{\mathcal{R}} %
\nc{\calP}{\mathcal{P}} %
\nc{\calV}{\mathcal{V}} %
\nc{\calW}{\mathcal{W}} %
\nc{\calJ}{\mathcal{J}} %
\nc{\calE}{\mathcal{E}} %
\nc{\calA}{\mathcal{A}} %
\nc{\calU}{\mathcal{U}} %
\nc{\calK}{\mathcal{K}} %
\nc{\calL}{\mathcal{L}} %
\nc{\rouge}{\color{red}}
\nc{\bleu}{\color{blue}}
\nc{\cyan}{\color{cyan}}
\nc{\noir}{\color{black}\rm}
\nc\dif{\mathop{}\!\mathrm{d}}   
\newcommand{\vertiii}[1]{{\left\vert\kern-0.25ex\left\vert\kern-0.25ex\left\vert #1 
    \right\vert\kern-0.25ex\right\vert\kern-0.25ex\right\vert}} 
\RedeclareMathOperator{\Im}{Im} 
\newcommand{\cA}{{\cal A}}
\newcommand{\cG}{{\cal G}}
\newcommand{\cS}{{\cal S}}
\newcommand{\cD}{{\cal D}}
\newcommand{\bx}{x}
\newcommand{\by}{y}
\newcommand{\ba}{\widehat{a}}
\newcommand{\bze}{0}
\newcommand{\re}{{\rm e}}
\newcommand{\ri}{{\rm i}}
\newcommand{\rd}{{\rm d}}
\newcommand{\beq}{\begin{equation}}
\newcommand{\eeq}{\end{equation}}
\newcommand{\beqs}{\begin{equation*}}
\newcommand{\eeqs}{\end{equation*}}
\newcommand{\bit}{\begin{itemize}}
\newcommand{\eit}{\end{itemize}}
\newcommand{\ben}{\begin{enumerate}}
\newcommand{\een}{\end{enumerate}}
\newcommand{\bal}{\begin{align}}
\newcommand{\eal}{\end{align}}
\newcommand{\bals}{\begin{align*}}
\newcommand{\eals}{\end{align*}}
\newcommand{\bse}{\begin{subequations}}
\newcommand{\ese}{\end{subequations}}
\newcommand{\bpr}{\begin{proposition}}
\newcommand{\epr}{\end{proposition}}
\newcommand{\bre}{\begin{remark}}
\newcommand{\ere}{\end{remark}}
\newcommand{\bpf}{\begin{proof}}
\newcommand{\epf}{\end{proof}}
\newcommand{\ble}{\begin{lemma}}
\newcommand{\ele}{\end{lemma}}
\newcommand{\bco}{\begin{corollary}}
\newcommand{\eco}{\end{corollary}}
\newcommand{\bex}{\begin{example}}
\newcommand{\eex}{\end{example}}
\newcommand{\bth}{\begin{theorem}}
\newcommand{\enth}{\end{theorem}}
\newcommand{\Rea}{\mathbb{R}}
\newcommand{\Com}{\mathbb{C}}
\newcommand{\Oi}{{\Omega^-}}
\newcommand{\Oe}{{\Omega^+}}
\newcommand{\eps}{\varepsilon}
\newcommand{\half}{\frac{1}{2}}
\newcommand{\LtG}{{L^2(\bound)}}
\newcommand{\LtGt}{{\LtG\rightarrow \LtG}}
\newcommand{\HoG}{H^1(\Gamma)}
\newcommand{\tendi}{\rightarrow \infty}
\newcommand{\tendo}{\rightarrow 0}
\newcommand{\DtN}{({\rm DtN})^+}
\def\XXint#1#2#3{{\setbox0=\hbox{$#1{#2#3}{\int}$}
     \vcenter{\hbox{$#2#3$}}\kern-.5\wd0}}
\newcommand*{\N}[1]{\left\|#1\right\|}
\newcommand{\tfa}{\text{ for all }}
\newcommand{\tfor}{\text{ for }}
\newcommand{\tin}{\text{ in }}
\newcommand{\ton}{\text{ on }}
\newcommand{\tand}{\text{ and }}
\newcommand{\tst}{\text{ such that }}
\newcommand{\ItD}{({\rm ItD})^-}
\newcommand{\bound}{\Gamma}
\definecolor{jwcol}{RGB}{27, 137, 18}  
\definecolor{dalcol}{rgb}{0.8,0,0}
\definecolor{jeffColor}{RGB}{102, 0, 204}
\definecolor{escol}{rgb}{0,0,0.8}
\definecolor{estcol}{rgb}{0,0.5,0}
\definecolor{esnewcol}{rgb}{0,0.5,0}
\newcommand{\Cppw}{{C_{\rm ppw}}}
\newcommand{\mythmname}[1]{\emph{\bf (#1.)}}
\newcommand{\noi}{\noindent}
\newcommand{\Cqo}{C_{\rm qo}}
\newcommand{\Capprox}{C_{\rm approx}}
\newcommand{\Crel}{C_{\rm reg}}
\newcommand{\DL}{D}
\newcommand{\cDL}{\cD}
\newcommand{\operator}{\cA}
\newcommand{\altoperator}{B}
\newcommand{\Galerkinv}{v}
\newcommand{\Galerkinw}{w}
\newcommand{\pert}{K}
\newcommand{\Ai}{\operatorname{Ai}}
\title{Does the Helmholtz boundary element method suffer from the pollution effect?}
\author{
    J. Galkowski\thanks{Department of Mathematics, University College London, 25 Gordon Street, London, WC1H 0AY, UK, 
    \tt J.Galkowski@ucl.ac.uk},
\quad
    E.~A.~Spence\thanks{Department of Mathematical Sciences, University of Bath, Bath, BA2 7AY, UK, 
      \tt E.A.Spence@bath.ac.uk}}
\date{
    \today
}
\begin{document}

\maketitle

\begin{abstract}
In $d$ dimensions, accurately approximating an arbitrary function oscillating with frequency $\lesssim k$ requires $\sim k^d$ degrees of freedom.
A numerical method for solving the Helmholtz equation (with wavenumber $k$ and in $d$ dimensions) suffers from the pollution effect if,  as $k\tendi$, the total number of degrees of freedom needed to maintain accuracy grows faster than this natural threshold (i.e., faster than $k^d$ for domain-based formulations, such as finite element methods, and $k^{d-1}$ for boundary-based formulations, such as boundary element methods).

It is well known that the $h$-version of the finite element method (FEM) (where accuracy is increased by decreasing the meshwidth $h$ and keeping the polynomial degree $p$ fixed) suffers from the pollution effect, and
research over the last $\sim$ 30 years has resulted in a near-complete rigorous understanding of 
 how quickly the number of degrees of freedom must grow with $k$ to maintain accuracy (and how this depends on both $p$ and properties of the scatterer). 
 
In contrast to the $h$-FEM, at least empirically, the $h$-version of the boundary element method (BEM) does \emph{not} suffer from the pollution effect (recall that in the boundary element method
the scattering problem is reformulated as an integral equation on the boundary of the scatterer, with this integral equation then solved numerically using a finite-element-type approximation space). However, the current best results in the literature on 
 how quickly the number of degrees of freedom for the $h$-BEM must grow with $k$ to maintain accuracy fall short of proving this.

In this paper, we prove that the $h$-version of the Galerkin method applied to the standard second-kind boundary integral equations for solving the Helmholtz exterior Dirichlet problem does not suffer from the pollution effect when the obstacle is nontrapping (i.e., does not trap geometric-optic rays). 
While the proof of this result relies on information about the large-$k$ behaviour of Helmholtz solution operators, we show in an appendix 
how the result can be proved using only Fourier series and asymptotics of Hankel and Bessel functions when the obstacle is a 2-d ball.
\end{abstract}


\paragraph{Keywords:} Helmholtz equation, scattering, high frequency, boundary integral equation, boundary element method, pollution effect.
%
\paragraph{AMS subject classifications:}
65N38, 65R20, 35J05

\maketitle

\section{Introduction}

The boundary element method is a popular way of computing approximations to solutions of scattering problems involving the Helmholtz equation. It has long been observed, but not yet proved, that this method does not suffer from the pollution effect (in contrast to the finite element method \cite{BaSa:00}). The main result of this paper is that the $h$-version of the Helmholtz boundary element method, using the standard second-kind boundary integral equations, does not suffer from the pollution effect when the obstacle has Dirichlet boundary conditions and is smooth and nontrapping; see Theorem \ref{thm:main1} below.

In this introduction, we recap the concepts needed to understand this result, namely the Helmholtz scattering problem and the concept of nontrapping (\S\ref{sec:1.1}), a precise definition of the pollution effect (\S\ref{sec:1.2}), our current understanding of the pollution effect for finite- and boundary-element methods (\S\ref{sec:1.3}-\ref{sec:1.4}), and the definition of the boundary element-method (\S\ref{sec:1.5}-\ref{sec:1.6}).
The main result is then stated in \S\ref{sec:main_result}, the ideas behind the result are discussed in \S\ref{sec:idea}, and the result is proved in \S\ref{sec:Galerkin}--\ref{sec:proof}. 
In the special case when the obstacle is a 2-d ball, 
an alternative proof of the main result using only Fourier series and asymptotics of Hankel and Bessel functions is given in \S \ref{sec:circle}.

\subsection{The Helmholtz scattering problem}\label{sec:1.1}

The Helmholtz equation 
\beq\label{eq:Helmholtz}
\Delta u + k^2 u =0
\eeq
with wavenumber $k>0$ is arguably the simplest possible model of wave propagation.
For example, if we look for solutions of the wave equation
\beq
\partial_t^2U
- c^2 \Delta U =0 \quad \text{ in the form} \quad 
\label{eq:time_harmonic}
U(\bx,t) =
u(\bx) \re^{\pm \ri \omega t},
\eeq
then the function $u(\bx)$ satisfies the Helmholtz equation \eqref{eq:Helmholtz} with $k = \omega/c$ (where $\omega$ is the angular frequency and $c$ is the wave speed). 

Because the Helmholtz equation is at the heart of linear wave propagation,
much effort has gone into both studying the properties of its solutions (for example their asymptotic behaviour as $k\tendi$) and designing methods for computing the solutions efficiently; for the latter, see, e.g., the recent review articles 
\cite{ChGrLaSp:12, ErGa:12, 
GaZh:19, GaZh:22}.

The main results of this paper concern the classic scattering problem of the Helmholtz equation posed in the exterior of an obstacle with Dirichlet boundary conditions. For simplicity, we 
state our results for plane-wave scattering by an obstacle with zero Dirichlet boundary conditions; 
see Remark \ref{rem:general_Dirichlet} below for how they carry over to the general Dirichlet problem.

Let $\Oi \subset \Rea^d$, $d\geq 2$ be a bounded open set -- the ``scatterer'' or ``obstacle" -- such that its open complement $\Oe :=\Rea^d \setminus \overline{\Oi }$ is connected. 
Let $\Gamma:= \partial \Oi $; our main result requires that $\Gamma$ is smooth (i.e., $C^\infty$), although the scattering problem is well-defined for Lipschitz $\Gamma$.
Let 
$H^1_{\rm loc}(\Oe)$ be the space of functions that are in $H^1(D)$ for every bounded $D\subset \Oe$.

\begin{definition}[Plane-wave sound-soft scattering problem]\label{def:scat}
Given $k>0$ and the incident plane wave $u^I(\bx):= \exp(\ri k \bx\cdot \ba)$ for $\ba\in\Rea^d$ with $|\ba|=1$, find the total field $u\in H^1_{\rm loc}(\Oe)$ satisfying 
\beqs
\Delta u +k^2 u=0\,\,\tin\,\,\Oe,\qquad
u = 0 \,\,\ton\Gamma,
\eeqs
and such that \(u^S := u - u^I\) satisfies
\beq\label{eq:src}
\partial_r u^S -\ri ku^S = o \big(r^{(1-d)/2}\big)  \text{ as }r:=|x|\rightarrow \infty, \text{ uniformly in $x/r$}.
\eeq
\end{definition}

It is well-known that the solution of the sound-soft plane-wave scattering problem exists and is unique; see, e.g., \cite[Theorem 3.13]{CoKr:83}, \cite[Theorem 2.12 and Corollary 2.13]{ChGrLaSp:12}.

The condition \eqref{eq:src} is the \emph{Sommerfeld radiation condition}, and expresses mathematically that, with the choice $\re^{-\ri \omega t}$ in \eqref{eq:time_harmonic}, the scattered wave moves away from the obstacle towards infinity; see, e.g., \cite[\S1.1.2]{Ih:98}.

The key geometric condition that governs the behaviour of Helmholtz solutions with $k$ large is that of \emph{trapping/nontrapping} (see, e.g., \cite[Epilogue \S1]{LaPh:89}). 

\begin{definition}[Nontrapping]\label{def:nontrapping} The obstacle
$\Oi\subset \Rea^d$ is \emph{nontrapping} if $\bound$ is $C^\infty$ and,
given $R$ such that $\overline{\Oi}\subset B_R(\bze)$, there exists $T(R)<\infty$ such that 
all the billiard trajectories (a.k.a.~geometric-optic rays) 
that start in $\Oe\cap B_R(\bze)$ at time zero leave $\Oe\cap B_R(\bze)$ by time $T(R)$.
\end{definition}

\begin{figure}
\begin{center}
\begin{tikzpicture}
\begin{scope}[scale=1.3]
\draw[fill=lightGray] plot [smooth cycle] coordinates { (2,1) (1/2,1)(-1/2,0)(1/2,-1)(2,-1)(1,0)};
\end{scope}
\begin{scope}[scale=1.3, xshift=150]
\draw[fill=lightGray] plot [smooth cycle] coordinates { (2,1) (1/2,1)(-1/2,0)(1/2,-1)(2,-1) (1.8,-.3) (1, -.6)(1/2,0) (1, .6)(1.8,.3)};
\draw[->,dashed] (1,0)--(1,-.6);
\draw[->,dashed] (1,0)--(1,.6);
\end{scope}
\end{tikzpicture}
\end{center}
\caption{\label{f:trap}On the left, 
a non-trapping object, and on the right, a trapping obstacle and one of its trapped rays.}
\end{figure}
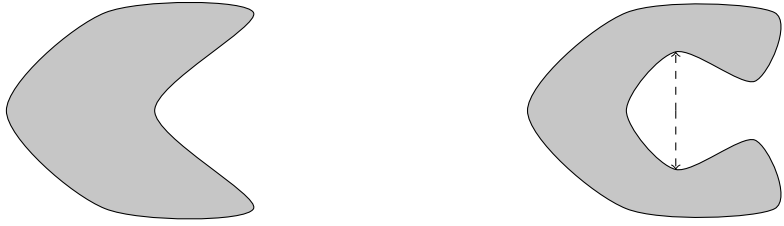

If $\Oi$ is $C^\infty$ and not nontrapping, then we say that it is \emph{trapping}; see Figure~\ref{f:trap} for an example of a non-trapping obstacle and a trapping obstacle.
The requirement that $\Gamma$ is $C^\infty$ is imposed so that when the billiard trajectories hit $\Gamma$, their reflection according to the law of geometric optics (``angle of incidence = angle of reflection'') is well-defined (see \cite{MeSj:82}). 
There has been much rigorous study of the reflection of high-frequency waves from non-smooth obstacles 
(see, e.g., \cite{Va:04, MeVaWu:13} and the references therein), 
but this does not impact the results of the present paper since we assume that $\Gamma$ is smooth (see \S\ref{sec:idea} for a discussion of why we make this assumption).

Our main results are proved under the assumption that $\Oi$ is nontrapping; in \S\ref{sec:idea} we discuss how this assumption enters our arguments.

\subsection{What is the pollution effect?}\label{sec:1.2}

\paragraph{Informal definition.}

A numerical method for solving the Helmholtz equation (with wavenumber $k$) suffers from the pollution effect if,  as $k\tendi$, \emph{the total number of degrees of freedom needed to maintain accuracy grows faster than $k^n$, where $n$ is the dimension of the physical domain in which the problem is formulated.}
Having number of degrees of freedom growing like $k^n$ is the natural threshold for the problem since  an oscillatory function with
frequency $\lesssim k$  can be accurately approximated by piecewise polynomials with $k^n$ degrees of freedom; this is expected in 1-d from the Nyquist--Shannon--Whittaker sampling theorem \cite{Wh:15, Sh:49} (see, e.g., \cite[Theorem 5.21.1]{BaNaBe:00}) and in arbitrary dimension from the Weyl law for the asymptotics of Laplace eigenvalues \cite{We:12} (see \S\ref{sec:func_calc} below for how the notion of the frequency of a function can be defined by Laplace eigenvalues).

\paragraph{Abstract framework covering both BEM and FEM.}

Let $V$ be a Hilbert space and let $\operator: V\to V'$ be a continuous, invertible linear operator, where $V'$ is the dual space of $V$. 
Given $f\in V'$, let $v\in V$ be the solution of $\operator v= f$; i.e., $v= \operator^{-1}f$. 

Let $(V_N)_{N>0}$ be an increasing sequence of finite-dimensional subspaces of $V$ 
with dimension $N$ (i.e., total number of degrees of freedom $N$), 
 such that $V_N$ are asymptotically dense in $V$, in the sense that, for all $w\in V$,  the \emph{best approximation error} $\min_{w_N \in V_N} \N{ w-w_N}_{V} \tendo$ as $N\tendi$. 

Let $v_N$ be the computed approximation in $V_N$ to $v$; we write this as $v_N= (\operator^{-1})_N f$, so that $(\operator^{-1})_N: V' \to V_N$ is the approximation of the solution operator.
 
For the finite-element method $v$ is the restriction to the computational domain of the solution $u$ of the sound-soft scattering problem (modulo any error incurred by this restriction), $V$ is the space $H^1$, and $n=d$. For the boundary-element methods we consider below, $v$ is a function on $\Gamma$ (possibly the normal derivative of $u$), $V$ is $L^2(\Gamma)$ (i.e., square integrable functions on $\Gamma$), and $n=d-1$, since the boundary $\Gamma$ is $(d-1)$-dimensional. 

\paragraph{Quasi-optimality.}

A fundamental property one seeks to prove about a sequence of approximate solutions $(v_N)_{N> 0}$ is that they are \emph{asymptotically quasi-optimal}; i.e., there exists $N_0>0$ and $\Cqo>0$ such that
\beq\label{eq:qo}
\N{v-v_N}_V \leq \Cqo \min_{w_N \in V_N} \N{ v-w_N}_{V} \quad\tfa N\geq N_0, \quad \text{where $v= \operator^{-1}f$ and $v_N =(\operator^{-1})_N f$. }
\eeq
The approximate solutions $(v_N)_{N>0}$ would be optimal if $\N{v-v_N}_V =\min_{w_N \in V_N} \N{ w-w_N}_{V}$; ``quasi-optimality'' is then optimality up to a constant factor, and  ``asymptotically" refers to the fact that \eqref{eq:qo} holds for sufficiently large $N$.

The standard analysis of finite- and boundary-element methods for the Helmholtz equation proves that, for fixed $k$, the computed solutions are asymptotically quasi-optimal (see, e.g., \cite{BrSc:08} for FEM and \cite{St:08, SaSc:11} for BEM), i.e., for each $k>0$ there exists $N_0=N_0(k)$,  depending on $k$ in some unspecified way, such that~\eqref{eq:qo} holds. 

\paragraph{Precise definition of the pollution effect.}

The pollution effect is when there exists a choice of $N$ larger than a constant multiple of $k^n$ (i.e., $N \geq \Lambda k^n$ for some $\Lambda>0$) and some choice of data ($f\in V'$) such that the smallest possible $\Cqo$ in \eqref{eq:qo} is unbounded in $k$. That is, if 
\beq\label{eq:qo_pollution}
\inf_{\Lambda>0}\limsup_{k\to \infty}\sup_{ N\geq \Lambda k^n} 
\sup_{f\in V'} \inf \bigg\{ \Cqo : \N{\operator^{-1} f-(\operator^{-1})_N f}_V \leq \Cqo \min_{w_N \in V_N} \N{ \operator^{-1} f-w_N}_{V}\bigg\}
=\infty;
\eeq
see, e.g., \cite[Definition 2.1]{BaSa:00}. Conversely, if the right-hand side of \eqref{eq:qo_pollution} is finite, then there exists $k_0, \Lambda,$ and $\Cqo$ such that for all $k\geq k_0$, $N\geq \Lambda k^{n},$ and $f\in V'$, 
\beqs
 \N{\operator^{-1} f-(\operator^{-1})_N f}_V \leq \Cqo \min_{w_N \in V_N} \N{ \operator^{-1} f-w_N}_{V};
\eeqs
i.e., $k$-uniform quasi-optimality is achieved (for all possible data) with a choice of $N$ proportional to $k^n$.

When the meshes in the FEM or BEM are \emph{quasi-uniform} (informally, all the mesh elements are of comparable size; see \cite[Definition 4.1.13]{SaSc:11} for a precise definition), then the total number of degrees of freedom $N\sim (p/h)^{n}$, where $h$ is the mesh-width and $p$ the polynomial degree.

In the $h$-version of the FEM or BEM accuracy is increased by decreasing $h$ and keeping $p$ fixed, and thus $N\sim k^n$ corresponds to $hk \sim 1$. 
For these methods, the $\sup_{ N\geq \Lambda k^n}$ in the definition of the pollution effect \eqref{eq:qo_pollution}
can then be replaced by $\sup_{ \Lambda\geq hk }$.

\subsection{The pollution effect for finite-element methods is well understood}\label{sec:1.3}

Empirically, the $h$-version of the FEM applied to the Helmholtz equation suffers from the pollution effect. Furthermore \cite{BaSa:00} proved that in two or higher dimensions the pollution effect is unavoidable for the $h$-FEM; more precisely, \cite{BaSa:00} worked in the framework of ``generalised FEMs'' introduced in \cite{BaOs:83} and proved that, in two or higher dimensions, any method with fixed polynomial degree $p$ (or, more generally, a fixed stencil) suffers from the pollution effect; see \cite[Theorem 4.6]{BaSa:00}.
    
Given that the $h$-FEM suffers from the pollution effect, two natural questions are the following.
\ben
\item[Q1] How must $h$ depend on $k$ for the quasi-optimal error estimate \eqref{eq:qo} to hold with $\Cqo$ independent of $k$?
\een
In engineering applications, the most-commonly used measure of error is the \emph{relative error} 
\beq\label{eq:rel_error}
\N{v-v_N}_V\big/ \N{v}_V.
\eeq
However, the relative error can only be small when restricting attention to a subclass of data. Indeed, since $\operator$ is assumed to be invertible, given $V_N$, we can choose $v\in V$ orthogonal to $V_N$, let $f:= \operator v$, and let $v_N := (\operator^{-1})_N f$. Then 
\beqs
\N{v-v_N}_V^2 = \N{v}^2_V + \N{v_N}^2_V \geq \N{v}^2_V, 
\eeqs
and thus the relative error cannot be small for all possible data.
\ben
\item[Q2] For a physically-relevant class of data $\widetilde{V}'\subset V'$ (such as that coming from an incident plane wave as in Definition \ref{def:scat}), how must $h$ depend on $k$ for the relative error to be controllably small? I.e., given $\eps>0$ and $\widetilde{V}'$, how must $h$ depend on $k$ and $\eps$ such that for all $f\in \widetilde{V}'$ the relative error \eqref{eq:rel_error} is $\leq \eps$?
\een

For the $h$-FEM applied to non-trapping problems, the answer to Q1 is that $h^p k^{p+1}$ must be sufficiently small, and the answer to Q2 is that $h^{2p} k^{2p+1}$ must be sufficiently small for data oscillating at scale $k^{-1}$.

These answers were first obtained for 1-d Helmholtz problems by \cite{AzKeSt:88, IhBa:95, IhBa:97} (see also \cite[Chapter 4]{Ih:98}).
Obtaining the multi-dimensional analogues of these results for a range of different FEMs remains a very active research area; see the papers
\cite{Me:95,Sa:06} (the earliest multi-dimensional results),
\cite{FeWu:09, Wu:14, ZhWu:21}
(on discontinuous Galerkin and interior penalty methods),
\cite{ChNi:18}
(on Helmholtz problems on domains with corners), 
\cite{BaChGo:17, ChNi:20, GaSpWu:20, GrSa:20}
(on variable-coefficient Helmholtz problems), and 
\cite{LiWu:19, GaChNiTo:22, GLSW1}
(on Helmholtz problems with perfectly-matched layers).
\footnote{We note that the pollution effect for Helmholtz finite element and finite difference methods can also be heuristically studied via so-called ``dispersion analysis''  \cite{HaHu:91, IhBa:95a, IhBa:95, DeArBaBo:99, Ai:04}. Here finite-element or finite-difference schemes are studied on an infinite uniform mesh for problems where an exact solution is $u(x) = \re^{\ri kx }$, and one seeks the ``discrete wavenumber'' $\widetilde{k}$ such that a numerical solution is $u_N(x_j) = \re^{\ri \widetilde{k} x_j}$, where $x_j$ are the nodes. The condition ``$h^{2p} k^{2p+1}$ sufficiently small'' (i.e., the answer to Q2) arises as the condition for $|\widetilde{k}- k|$ to be controllably small; see \cite[Theorem 3.2]{IhBa:97}, \cite[Theorem 4.22]{Ih:98}.}

There has been much research on designing FEMs that mitigate against the pollution effect; 
four directions of this research are
(i) high-order methods \cite{ZhWu:13, DuWu:15, Ch:16} and $hp$ methods  \cite{MeSa:10,MeSa:11, EsMe:12, MePaSa:13, LaSpWu:20a, LaSpWu:21},
(ii) Trefftz methods (i.e., using basis functions that are locally solutions of $\Delta u +k^2u=0$); see, e.g., the review \cite{HiMoPe:16} (in particular \cite[\S5]{HiMoPe:16},
(iii) multiscale methods involving special pre-computed test functions \cite{GaPe:15, Pe:17, BrGaPe:17, HaPe:21, FrHaPe:21}
(iv) the so-called ``discontinuous Petrov Galerkin (DPG)'' method of \cite{DeGoMuZi:12} (which is a least-squares method in a nonstandard inner product).

\subsection{The pollution effect for boundary-element methods is not yet rigorously understood}\label{sec:1.4}

The situation for the BEM is well-summarised by the following quotation from \cite{BaBeFaMaTo:17}.
\begin{quotation}
\noi It is generally admitted that Boundary Integral Equations (BIE) lead to less ``pollution effect'' than FEMs even if to our knowledge, no formal study has confirmed such a property.
\end{quotation}
Indeed, it is completely standard in the numerical-analysis and engineering communities to compute approximations to Helmholtz scattering problems via boundary integral equations using a fixed number of degrees of freedom per wavelength, i.e., $N\sim k^{d-1}$, both for Galerkin \cite{FiGaGa:04, BeVaGe:17} and collocation \cite{Ma:02, Ma:08} BEMs, and also for Nystr\"om methods \cite{BrElTu:12, LaAmGr:14, HaBaMaYo:14}. 
\footnote{Intriguingly however, \cite{Ma:16a, BaMa:17, Ma:18} recently identified a loss of accuracy similar to the pollution effect in the collocation BEM applied to interior Helmholtz problems.}

Numerical experiments indicate that, at least for obstacles without strong trapping, the $h$-BEM is quasioptimal (with constant independent of $k$) if $hk$ is sufficiently small; see \cite[\S4]{LoMe:11}, \cite[\S5]{GrLoMeSp:15}. However, 
in existing theoretical investigations \cite{BuSa:06, BaSa:07, LoMe:11, Me:12, GrLoMeSp:15, GaMuSp:19}, the best result is that the $h$-BEM is quasi-optimal (with constant independent of $k$)
for the standard second-kind BIEs for the exterior Dirichlet problem (defined in \S\ref{sec:1.5} below)
 if $hk^{4/3}$ is sufficiently small and the scatterer is smooth and convex \cite[Theorem 1.10(c)]{GaMuSp:19} (the current best results for more general domains, which are also in \cite{GaMuSp:19}, involve higher powers of $k$).
\footnote{
The only rigorous result we know of that is (i) about the convergence of a boundary integral method applied to the Helmholtz equation and (ii) valid only when $hk$ is small is that in \cite{ChRaRo:02}. Indeed, for the Helmholtz in an infinite half-plane with an impedance boundary condition solved using a collocation boundary element method and the finite-section method, \cite{ChRaRo:02} proved 
that the error is controllably small, relative to the data, if $hk$ is sufficiently small.}

The results of \cite{LoMe:11, Me:12} show, for these same BIEs, that if $\Gamma$ is analytic and the norm of the inverse of the boundary integral operator is bounded polynomially in $k$ then there exists $C_1, C_2>0$ such that the $hp$-BEM is quasi-optimal with $\Cqo$ independent of $k$ if 
\beqs
\frac{hk}{p}\leq C_1 \quad \tand\quad p\geq C_2 \log k.
\eeqs
(this is the analogous result to the $hp$-FEM results mentioned at the end of \S\ref{sec:1.3}). The abstract to \cite{LoMe:11} remarks that 
\begin{quotation}
\noi Numerical examples \dots even suggest that in many cases quasi-optimality is given under the weaker condition that $kh/p$ is sufficiently small [with $p$ fixed].
\end{quotation}
In this paper we rigorously explain this observation when the obstacle is nontrapping, showing that in this case the $h$-BEM does not suffer from the pollution effect. 

\subsection{The Helmholtz plane-wave sound-soft scattering problem solved via boundary integral equations}\label{sec:1.5}

\paragraph{The standard second-kind boundary integral equations for solving the plane-wave sound-soft scattering problem.}
In this section we recall how the solution of the plane-wave sound-soft scattering problem of Definition \ref{def:scat} can be expressed in terms of the solution of boundary integral equations involving the operators 
\beq\label{eq:DBIEs}
A_k:= \half I + \DL_k - \ri k S_k,
    \quad\tand\quad
A_k':= \half I + \DL'_k - \ri k S_k
       \eeq
where $S_k$, $\DL_k$, and $\DL'_k$ are the single-, double-, and adjoint-double-layer operators defined in \eqref{eq:SD} and \eqref{eq:D'} below. The $'$ notation is used since $A_k$ and $A_k'$ are adjoint with respect to the real-valued $L^2(\Gamma)$ inner product.

There are a variety of spaces in which one can pose equations involving $A_k$ and $A_k'$. The most natural space for solving such equations with the Galerkin method is $\LtG$ (since the inner product is local).
When $\Gamma$ is $C^1$, $S_k$, $\DL_k$, and $\DL'_k$ are compact on $\LtG$, and thus $A_k$ and $A_k'$ are compact perturbations of a multiple of the identity. Such integral operators fall into the class of ``second-kind'' operators -- see \cite[\S1.1.4]{At:97} -- and 
the solvability of integral equations involving these operators is covered by Fredholm theory.
One can then show that $A_k$ and $A_k'$ are bounded and invertible operators from $\LtG$ to itself when $\Gamma$ is smooth \cite[Theorem 3.33]{CoKr:83} (indeed, even when $\Gamma$ is only Lipschitz; see \cite[Theorem 2.7]{ChLa:07}, \cite[Theorem 2.27]{ChGrLaSp:12}).

\paragraph{How the boundary integral equations \eqref{eq:DBIEs} are obtained.}
Let $\Phi_k(x,y)$ be the fundamental solution of the Helmholtz equation 
\beqs
\Phi_k(x,y):= 
\frac{\ri}{4}\left(\frac{k}{2\pi |x-y|}\right)^{(d-2)/2}H_{(d-2)/2}^{(1)}\big(k|x-y|\big)=
 \left\{\begin{array}{cc}
\displaystyle{\frac{\ri}{4}H_0^{(1)}\big(k|x-y|\big)}, & d=2,\\
\displaystyle{\frac{\re^{\ri k |x-y|}}{4\pi |x-y|}}, & d=3,
\end{array}\right.
\eeqs
where $H^{(1)}_m$ denotes the Hankel function of the first kind of order $m$ (see, e.g., \cite[Equation 5.118]{Sta67}).
The single- and double-layer potentials, $\cS_k$ and $\cDL_k$ respectively, are defined for $k\in \Com$, $\phi\in L^2(\Gamma)$, and $\bx \in \Rea^d\setminus \Gamma$ by
\begin{align*}
    \calS_k \phi (\bx) = \int_{\Gamma} \Phi_k (\bx,\by) \phi (\by) \dif s (\by) \quad \tand\quad
    \cDL_k \phi (\bx) = \int_{\Gamma} \dfrac{\partial \Phi_k (\bx,\by)}{\partial \nu(\by)} \phi (\by) \dif s (\by).
\end{align*}
The standard single-layer, double-layer, and adjoint-double-layer, operators are defined for $k\in \mathbb{C}$, $\phi\in \LtG$, and $x\in \Gamma$ by 
\begin{align}\label{eq:SD}
S_k \phi(\bx) := &\int_\bound \Phi_k(\bx,\by) \phi(\by)\,\rd s(\by), \qquad
\DL_k \phi(\bx) := \int_\bound \frac{\partial \Phi_k(\bx,\by)}{\partial \nu(\by)}  \phi(\by)\,\rd s(\by),\\
&\qquad \DL_k' \phi(\bx) := \int_\bound \frac{\partial \Phi_k(\bx,\by)}{\partial \nu(\bx)}  \phi(\by)\,\rd s(\by);
 \label{eq:D'}
\end{align}
when $\Gamma$ is $C^2$, the integrals defining $S_k, \DL_k,$ and $\DL'_k$ are all weakly singular; see, e.g., \cite[Page 6 and \S2.4]{CoKr:83}.

\begin{theorem}\mythmname{The plane-wave sound-soft scattering problem formulated in terms of boundary integral equations}
\label{thm:BIEs}

(i) If $u$ is solution of the plane-wave sound-soft scattering problem of Definition \ref{def:scat}, then 
\beq\label{eq:Ddirect}
A_k' \partial_{\nu} u = \partial_{\nu} u^I - \ri k u^I,\quad\text{ and }\quad u=u^I-\cS_k(\partial_\nu u).
\eeq

(ii) If $v\in\LtG$ is the solution to 
\beq\label{eq:indirect}
A_k v = - u^I,
\quad\text{ then } \quad
u= u^I + (\cDL_k -\ri k \cS_k)v
\eeq
is the solution of the plane-wave sound-soft scattering problem of Definition \ref{def:scat}.
\end{theorem}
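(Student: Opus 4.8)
The plan is to prove Theorem \ref{thm:BIEs} by the classical route of representing the scattered field as a combined single- and double-layer potential, using the jump relations of the layer potentials together with the uniqueness of the scattering problem.

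For part (i), I would start from the known solution $u$ of the scattering problem and apply Green's integral representation theorem in $\Oe$ to $u^S = u - u^I$: since $u^S$ satisfies the Helmholtz equation in $\Oe$ and the Sommerfeld radiation condition \eqref{eq:src}, Green's representation formula gives $u^S = -\cDL_k(\gamma u^S) + \cS_k(\partial_\nu u^S)$ in $\Oe$, where $\gamma$ and $\partial_\nu$ are the exterior Dirichlet and Neumann traces (here I use that $\Phi_k$ is radiating, so the contribution from the sphere at infinity vanishes; see \cite[Theorem 2.43]{ChGrLaSp:12} or \cite[Theorem 3.1]{CoKr:83}). Combined with the fact that $u^I$ is a solution of the Helmholtz equation inside $\Oi$ (so its interior representation gives $u^I = \cDL_k(\gamma u^I) - \cS_k(\partial_\nu u^I)$ in $\Oi$ and $0$ outside), and using the boundary condition $\gamma u = 0$ on $\Gamma$, I get $u = u^I - \cS_k(\partial_\nu u)$ in $\Oe$. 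Then I take the exterior Neumann trace of this identity: using the jump relation $\partial_\nu^+ \cS_k \phi = (-\tfrac12 I + \DL_k')\phi$ (the standard trace formula for the single-layer potential; see \cite[Theorem 2.41]{ChGrLaSp:12}), applied to $\phi = \partial_\nu u$, yields $\partial_\nu u = \partial_\nu u^I - (-\tfrac12 I + \DL_k')\partial_\nu u$, which rearranges to $(\tfrac12 I + \DL_k')\partial_\nu u = \partial_\nu u^I$. Adding $-\ri k S_k(\partial_\nu u) = -\ri k (u - u^I)|_\Gamma = \ri k u^I|_\Gamma$ (using $\gamma u = 0$) to both sides gives $A_k'\partial_\nu u = \partial_\nu u^I - \ri k u^I$ on $\Gamma$, as claimed.

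For part (ii), given $v \in \LtG$ solving $A_k v = -u^I$, I would define $\widetilde{u} := u^I + (\cDL_k - \ri k \cS_k)v$ and verify it solves the scattering problem. The potential $(\cDL_k - \ri k\cS_k)v$ satisfies the Helmholtz equation in $\Rea^d \setminus \Gamma$ and, since $\Phi_k$ is radiating, satisfies the Sommerfeld radiation condition in $\Oe$; adding $u^I$ (which is an entire Helmholtz solution) preserves both, so $\widetilde{u} \in H^1_{\rm loc}(\Oe)$ solves the PDE and \eqref{eq:src}. It remains to check the Dirichlet boundary condition: taking the exterior Dirichlet trace and using the jump relations $\gamma^+ \cDL_k v = (\tfrac12 I + \DL_k)v$ and $\gamma^+ \cS_k v = S_k v$ gives $\gamma^+ \widetilde{u} = u^I + (\tfrac12 I + \DL_k - \ri k S_k)v = u^I + A_k v = u^I - u^I = 0$ on $\Gamma$. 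Hence $\widetilde{u}$ solves the scattering problem, and by the uniqueness part of the well-posedness statement cited just before Theorem \ref{thm:BIEs} (\cite[Theorem 3.13]{CoKr:83}, \cite[Corollary 2.13]{ChGrLaSp:12}), $\widetilde{u} = u$.

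The main obstacle — more a bookkeeping issue than a genuine difficulty — is the careful justification of Green's representation formula for $u^S$ in the unbounded domain $\Oe$ at the required regularity ($u \in H^1_{\rm loc}(\Oe)$, so traces live in $\HhG$ and $\HmhG$ rather than in classical spaces), which requires the radiating behaviour of the fundamental solution to kill the boundary terms at infinity and a density/approximation argument to pass from the $C^2$ case to the needed generality; all of this is standard and is done in, e.g., \cite[\S2.5--2.7]{ChGrLaSp:12} or \cite[Chapter 3]{CoKr:83}, so I would simply cite it. A secondary point is ensuring all the layer-potential mapping properties and jump relations used are valid on $\LtG$ for smooth $\Gamma$, which follows from the compactness and continuity results recalled in \S\ref{sec:1.5}.
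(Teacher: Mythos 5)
Your proposal is correct and follows essentially the same route as the paper, whose ``proof'' is itself only a citation (to \cite[Theorems 2.43 and 2.46, Equations 2.70--2.72]{ChGrLaSp:12}) together with exactly the sketch you give: the direct method via Green's representation of $u^S$ in $\Oe$ and of $u^I$ in $\Oi$, followed by a linear combination of the Dirichlet and Neumann traces, for part (i); and the indirect combined-potential ansatz, the jump relations, and uniqueness of the scattering problem for part (ii). The only blemishes are orientation/sign slips in intermediate steps --- your Green's representation formulas for $u^S$ and $u^I$ carry the opposite normal convention to the jump relation $\partial_\nu^+\cS_k\phi=(-\tfrac12 I+\DL_k')\phi$ that you use afterwards, and one should have $S_k(\partial_\nu u)=(u^I-u)|_\Gamma=u^I|_\Gamma$ rather than $(u-u^I)|_\Gamma$, so that $-\ri k S_k(\partial_\nu u)=-\ri k u^I$ --- but these are self-correcting conventions and the final identities you reach are exactly those of the theorem.
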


\bpf[References for the proof and summary of the ideas]
Part (i) is proved in, e.g., \cite[Theorem 2.46]{ChGrLaSp:12}. Part (ii) is proved in, e.g., \cite[Equations 2.70-2.72]{ChGrLaSp:12}.
Both parts use that $\cD_kv$ and $\cS_kv$ satisfy the Helmholtz equation away from $\Gamma$ and satisfy the radiation condition~\eqref{eq:src}.
Part (i) uses that $u(x) = u^I(x)- \cS_k (\partial_{\nu} u)(x)$ for $x\in \Omega^+$ by Green's integral representation theorem (applied to $u^S$ in $\Oe$ and $u^I$ in $\Oi$); this is the so-called \emph{direct method}. 
Taking a linear combination of the limits of both this representation and its normal derivative as $x$ approaches $\Gamma$ from $\Oe$, 
we obtain the integral equation in \eqref{eq:Ddirect}. This idea of taking a linear combination goes back to \cite{BrWe:65, Le:65, Pa:65}, and ensures that $A_k'$ is invertible.
Part (ii) poses the ansatz that 
$u^S(x) = (\cDL_k -\ri k \cS_k)v(x)$ for 
$x\in \Omega^+$
for some unknown density $v$; this is the so-called \emph{indirect method}. 
Imposing the boundary condition that $u^S= -u^I$ on $\Gamma$, we obtain the integral equation \eqref{eq:indirect}.
\epf

\subsection{The Galerkin method and assumptions on the boundary-element space}\label{sec:1.6}

We consider solving the boundary integral equation $\operator v= f$ in $\LtG$ with the Galerkin method:~given a finite-dimensional subspace $V_N\subset \LtG$, 
\begin{align}\label{eq:Galerkin}
    \text{find }v_N \in V_N\tst\quad(\operator v_N, w_N)_{\LtG} =(f,w_N)_{\LtG}\quad\tfa w_N\in V_N.
\end{align}

The abstract framework in \S\ref{sec:1.2} involved the operator $(\operator^{-1})_N$ mapping the data to the approximate solution; 
we show in \S\ref{sec:Galerkin} below (see \eqref{eq:Galerkin_P_N}) that, for the Galerkin method, $(\operator^{-1})_N = (P_N \operator)^{-1} P_N$, where $P_N$ is the orthogonal projection from $V$ to $V_N$ and $P_N \operator$ is considered as an operator from $V_N$ to $V_N$ (after using the fact that $V$ is a Hilbert space to identify $V$ and $V'$).

The $h$-version of the boundary element method uses a sequence of approximation spaces $(V_{N_h})_{h>0}$ given by piecewise polynomials of degree $p$ for some fixed $p\geq 0$ on a sequence of meshes of diameter $h>0$; for ease of notation we let $(V_h)_{h>0}:= (V_{N_h})_{h>0}$.
It is well-known that, when the meshes are additionally \emph{shape-regular} (for each element, its width divided by the diameter of the largest inscribed ball is uniformly bounded; see \cite[Definition 4.1.12]{SaSc:11}), these subspaces satisfy the following assumption.

\begin{assumption}\label{ass:Vh}
    \((V_h)_{h> 0}\) is a sequence of finite dimensional subspaces of $\LtG$ and there exists $\Capprox>0$ such that for all    $h>0$
\beq\label{eq:bae}
        \min_{w_h \in V_h} \N{ w-w_h}_{L^2(\Gamma)} \leq \Capprox h \N{ w}_{H^1(\Gamma)} \quad\tfa w\in H^1(\Gamma).
\eeq
\end{assumption}
(Recall that $\|w\|_{H^1(\Gamma)}^2:= \N{ \nabla_{\Gamma} w }_{\LtG}^2 + \N{ w }_{L^2(\Gamma)}^2$,
where $\nabla_\Gamma$ is the surface gradient operator, defined in terms of a parametrisation of the boundary by, e.g., \cite[Equation A.14]{ChGrLaSp:12}.)

Indeed, piecewise-polynomial subspaces satisfying Assumption \ref{ass:Vh} are described in
\cite[Chapter 4]{SaSc:11}, with \cite[Theorem 4.3.22]{SaSc:11} showing that the spaces of continuous boundary-element functions 
denoted by $\cS_{\cG}^{p,0}$ \cite[Definition 4.1.36]{SaSc:11} satisfy 
Assumption \ref{ass:Vh} and 
\cite[Theorem 4.3.19]{SaSc:11} showing that the spaces of discontinuous boundary-element functions 
denoted by $\cS_{\cG}^{p,-1}$ \cite[Definition 4.1.17]{SaSc:11} satisfy 
Assumption \ref{ass:Vh}. Note that, in these cases, the constant $\Capprox$ depends on $p$.

We highlight that Assumption \ref{ass:Vh} is the only requirement on $(V_h)_{h>0}$ needed below. There are sequences $(V_h)_{h>0}$ arising from piecewise polynomials on non-quasi-uniform sequences of meshes that satisfy Assumption \ref{ass:Vh}; however, as mentioned in \S\ref{sec:1.2}, quasi-uniformity is required for the total number of degrees of freedom to $\sim (p/h)^d$.

\section{The main result:~the $h$-BEM does not suffer from the pollution effect}\label{sec:main_result}

\begin{theorem}[Quasi-optimal error estimate for $hk$ sufficiently small]\label{thm:main1}
Suppose that $\Oi$ is nontrapping
and $(V_h)_{h>0}$ satisfies Assumption \ref{ass:Vh}. 

For all $k_0>0$, there exists $\Cppw>0$ and $\Cqo>0$ such that if $\operator$ is either $A_k$ or $A_k'$,
\beq\label{eq:conditions2}
hk \leq \Cppw, \quad\tand \quad k\geq k_0,
\eeq
then, for all $f\in \LtG$, the Galerkin solution $v_N$ to \eqref{eq:Galerkin} exists, is unique, and satisfies 
\beq\label{eq:qo_main1}
\N{v-v_N}_{\LtG} \leq \Cqo \min_{w_N \in V_h} \N{v-w_N}_{\LtG}.
\eeq
\end{theorem}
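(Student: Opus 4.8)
The plan is to use the standard projection-method framework: for the Galerkin method, $v_N = (P_N\operator)^{-1}P_N f$, and quasi-optimality with a $k$-independent constant follows once we show that $P_N\operator$ is invertible on $V_h$ with $\N{(P_N\operator)^{-1}P_N}_{\LtG\to\LtG}$ bounded uniformly in $k$ (for $hk$ small and $k\ge k_0$). The classical Schatz-type argument reduces this to an approximation property of the solution operator $\operator^{-1}$: writing $\operator = \tfrac12 I + \pert_k$ where $\pert_k := \DL_k - \ri k S_k$ (or its adjoint), one has for $v-v_N$ the Galerkin orthogonality $(\operator(v-v_N),w_N)=0$, so $\tfrac12(v-v_N) = -(\pert_k(v-v_N),\cdot)$ tested against $V_h$, giving
\beqs
\tfrac12\N{v-v_N}_{\LtG} \leq \tfrac12\min_{w_N\in V_h}\N{v-w_N}_{\LtG} + \big\|(I-P_N)\pert_k(v-v_N)\big\|_{\LtG} + \N{\pert_k}_{\LtG\to\LtG}\min_{w_N\in V_h}\N{v-w_N}_{\LtG}.
\eeqs
The crucial term is $\|(I-P_N)\pert_k(v-v_N)\|_{\LtG}$: by Assumption \ref{ass:Vh} this is $\leq \Capprox h\N{\pert_k(v-v_N)}_{\HoG}$, so the argument closes provided $\pert_k$ maps $\LtG\to\HoG$ with norm $\lesssim k$ — then $\Capprox h \N{\pert_k}_{\LtG\to\HoG}\lesssim hk$ is absorbed when $hk\leq\Cppw$ with $\Cppw$ small. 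This is exactly where nontrapping enters.

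First I would collect the needed $k$-explicit bounds on the boundary integral operators. From the scattering literature (the references in \S\ref{sec:1.5}, and the body of work on $k$-explicit BIE bounds that this paper presumably cites in the suppressed sections), nontrapping gives $\N{\operator^{-1}}_{\LtG\to\LtG}\lesssim 1$ and, more importantly, $k$-explicit bounds of the form $\N{S_k}_{\LtG\to\HoG}\lesssim k^{\text{something}}$, $\N{\DL_k}_{\LtG\to\HoG}\lesssim k$, etc. The key estimate to establish (or cite) is that $\pert_k = \DL_k - \ri k S_k$ is bounded $\LtG\to\HoG$ with norm $\lesssim k$ when $\Gamma$ is smooth and $\Oi$ nontrapping; equivalently, since $\operator - \tfrac12 I = \pert_k$, that $\operator$ itself is bounded $\LtG\to\HoG$ with norm $\lesssim k$. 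For the adjoint $\operator = A_k'$ one works with $\DL_k' - \ri k S_k$ analogously, using that $\DL_k'$ is the $L^2$-adjoint of $\DL_k$; the $\HoG$-mapping bound on $A_k'$ should follow by a duality/interpolation argument or by direct estimation of the kernels using the smoothness of $\Gamma$.

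Second, with these bounds in hand, the Schatz argument proceeds: from the displayed inequality,
\beqs
\big(\tfrac12 - \Capprox h\N{\pert_k}_{\LtG\to\HoG}\big)\N{v-v_N}_{\LtG} \leq \big(\tfrac12 + \N{\pert_k}_{\LtG\to\LtG}\big)\min_{w_N\in V_h}\N{v-w_N}_{\LtG},
\eeqs
but this naive version has $\N{\pert_k}_{\LtG\to\LtG}$ on the right, which grows with $k$. The standard fix is a duality (Aubin–Nitsche) argument: bound $\N{v-v_N}_{\LtG}$ by $\min\N{v-w_N}$ plus a term involving $\N{(I-P_N)\operator^{-*}z}$ where $z = v-v_N$, and use $\N{\operator^{-*}}_{\LtG\to\HoG}\lesssim k\cdot\N{\operator^{-1}}_{\LtG\to\LtG}\lesssim k$ (again nontrapping), so that the error term is $\lesssim \Capprox hk\N{v-v_N}_{\LtG}$ and can be absorbed for $hk\leq\Cppw$. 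Existence and uniqueness of $v_N$ then follow because $P_N\operator$ is injective on the finite-dimensional space $V_h$ (the a priori bound forces $v-v_N$ small, hence $v_N$ unique, hence $P_N\operator$ bijective). I would present this as: (a) invertibility of $P_N\operator$ on $V_h$ via the a priori estimate, (b) the quasi-optimality bound \eqref{eq:qo_main1} with $\Cqo$ depending only on $\Capprox$, the $\LtG\to\LtG$ and $\LtG\to\HoG$ operator-norm constants, and $k_0$.

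The main obstacle is the second step's duality argument and, underpinning it, securing the $\LtG\to\HoG$ bound $\N{\operator^{-1}}_{\LtG\to\HoG}\lesssim k$ under the nontrapping hypothesis — this is the only place the geometry is used, and it relies on $k$-explicit resolvent estimates for the exterior Dirichlet problem (nontrapping resolvent bound $\N{\text{resolvent}}\lesssim 1$, together with trace/elliptic-regularity bounds that contribute the single power of $k$). Tracking the exact power of $k$ here — ensuring it is $k^1$ and not $k^{3/2}$ or worse, which is what would be needed to beat the existing $hk^{4/3}$ result and reach $hk\lesssim 1$ — is the heart of the matter; I expect this to draw on the microlocal/semiclassical analysis of the layer-potential operators alluded to in \S\ref{sec:idea}, using smoothness of $\Gamma$ crucially (so that the relevant operators are, up to smoothing errors, semiclassical Fourier integral operators associated to the billiard flow).
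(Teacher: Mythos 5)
There is a genuine gap, and it sits exactly where you flag uncertainty. Your argument reduces everything to the claim that $\pert_k=\DL_k-\ri kS_k$ maps $\LtG\to\HoG$ with norm $\lesssim k$ (and, in the duality step, that $\N{\operator^{-1}}_{\LtG\to\HoG}\lesssim k$). Both claims fail. The full double-layer operator concentrates on glancing frequencies $|\xi|\approx k$, where its symbol degenerates, and its $\LtG\to\LtG$ norm already grows like a positive power of $k$; consequently $\N{\DL_k}_{\LtG\to\HoG}$ grows strictly faster than $k$. This is precisely the obstruction behind the previously best-known condition $hk^{4/3}\lesssim 1$ cited in \S\ref{sec:1.4}: running a Schatz-type argument with bounds on the \emph{full} operators cannot do better. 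The bound $\N{\operator^{-1}}_{\LtG\to\HoG}\lesssim k$ is also not available as stated, since $\operator^{-1}$ differs from a multiple of the identity by a bounded operator and the identity does not map $\LtG$ into $\HoG$ at all.

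The paper's proof supplies the two ingredients your proposal is missing. First, the relevant composite is $\pert(I+\pert)^{-1}=(2\operator-I)(2\operator)^{-1}$ (the sufficient condition of Theorem \ref{thm:abstract} factors $(I+\pert)$ out on the \emph{right}), and this composite is split by a frequency cutoff $\chi(-k^{-2}\Delta_\Gamma)$. On the low-frequency side one uses the algebraic identity $(2\operator-I)(2\operator)^{-1}=I-(2\operator)^{-1}$, so that only the $\LtG\to\LtG$ bound on $\operator^{-1}$ (which is $O(1)$ by nontrapping) is needed, the factor $k$ coming entirely from the smoothing property $\N{\chi(-k^{-2}\Delta_\Gamma)}_{\LtG\to\HoG}\lesssim k$ of Lemma \ref{l:basicElliptic}. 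On the high-frequency side one uses Theorem \ref{thm:HFSD}: the cut-off operators $(I-\chi(-k^{-2}\Delta_\Gamma))\DL_k$ and $(I-\chi(-k^{-2}\Delta_\Gamma))S_k$ \emph{do} satisfy the $\LtG\to\HoG$ bounds $\lesssim k$ and $\lesssim 1$ respectively, because removing frequencies $\leq k$ removes the glancing contribution; this is where the semiclassical decomposition of the layer potentials enters. Without the frequency splitting and the identity $I-(2\operator)^{-1}$, the single power of $k$ you need cannot be obtained, so the argument as proposed does not close.
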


The order of the quantifiers in Theorem \ref{thm:main1} (and also later results in the paper) dictates what the constants depend on; e.g., in Theorem \ref{thm:main1}, $\Cppw$ and $\Cqo$ depend on $\Omega_-$, 
the spaces $(V_h)_{h>0}$, and $k_0$, but are independent of $k$, $h$, and the choice of $A_k$ or $A_k'$.

The subscript ``ppw'' on $\Cppw$ indicates that, via \eqref{eq:conditions2}, this constant controls the number of points per wavelength. 
If the spaces $(V_h)_{h>0}$ are quasi-uniform, then $N \sim h^{-d}$, and thus Theorem \ref{thm:main1} shows that the Galerkin method is quasi-optimal (with constant independent of $k$) when the total number of degrees of freedom is a multiple of $k^d$; i.e., \emph{the $h$-BEM does not suffer from the pollution effect}.

Theorem \ref{thm:main1} covers the Galerkin method applied to $\operator v =f$ for general $f\in \LtG$. We now restrict to the case when the data comes from the plane-wave sound-soft scattering problem (i.e., the right-hand side $f$ is as described in Theorem \ref{thm:BIEs}), and bound the relative error. 
To do this, we use in the bound \eqref{eq:qo_main1} the bound \eqref{eq:bae} from Assumption \ref{ass:Vh} and the following lemma (proved in \cite[Lemma 1.3]{GaMaSp:22}), 
describing the oscillatory character of the solution $v$ in this case. 

\ble\mythmname{Bound on the unknown $v$ in the BIEs for the sound-soft scattering problem}\label{lem:Crel}
Given $k_0>0$ there exists $\Crel>0$ (with the subscript ``{\rm reg}'' standing for ``regularity'') such that 
if $\operator$ is one of $A_k, A_k'$ and $v$ is the solution to $\operator v = f$ where the right-hand side $f$ is as described in Theorem \ref{thm:BIEs}, then
\beqs
\N{v}_{H^1(\Gamma)} \leq \Crel k \N{v}_{\LtG} \quad\tfa k\geq k_0.
\eeqs
\ele

\begin{corollary}[Bound on the relative error for $hk$ sufficiently small]\label{cor:main1}
Suppose that $\Oi$ is nontrapping and $(V_h)_{h>0}$ satisfies Assumption \ref{ass:Vh}. 
For all $k_0>0$, there exists $\Cppw>0$ and $\Cqo>0$ such that if 
$\operator$ is either $A_k$ or $A_k'$ and \eqref{eq:conditions2} holds, then for all data $f$ coming from the plane-wave sound-soft scattering problem
the Galerkin solution $v_N$ to \eqref{eq:Galerkin} exists, is unique, and satisfies 
\beq\label{eq:rel_err1}
\N{v-v_N}_{\LtG} \leq \Cqo \Crel hk \N{v}_{\LtG} .
\eeq
\end{corollary}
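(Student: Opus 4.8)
The plan is to deduce Corollary \ref{cor:main1} directly from Theorem \ref{thm:main1}, Lemma \ref{lem:Crel}, and the interpolation estimate in Assumption \ref{ass:Vh}; this is the boundary-integral-equation analogue of the ``answer to Q2'' for the $h$-FEM discussed in \S\ref{sec:1.3}.

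First I would apply Theorem \ref{thm:main1} with the given $k_0$ to obtain $\Cppw>0$ and a constant $\Cqo'>0$ (renamed to avoid a clash with the $\Cqo$ appearing in the statement) such that, whenever \eqref{eq:conditions2} holds and $\operator$ is $A_k$ or $A_k'$, the Galerkin solution $v_N$ to \eqref{eq:Galerkin} exists, is unique, and satisfies $\N{v-v_N}_{\LtG}\leq \Cqo' \min_{w_N\in V_h}\N{v-w_N}_{\LtG}$. Since the data $f$ coming from the plane-wave sound-soft scattering problem (as in Theorem \ref{thm:BIEs}) is a particular element of $\LtG$, the existence, uniqueness, and quasi-optimality assertions transfer verbatim; and since $A_k, A_k'$ are invertible on $\LtG$, the exact solution $v=\operator^{-1}f$ is well defined, so Lemma \ref{lem:Crel} applies to it.

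Next I would bound the best-approximation error. By Lemma \ref{lem:Crel}, $\N{v}_{H^1(\Gamma)}\leq \Crel k\N{v}_{\LtG}<\infty$, so in particular $v\in H^1(\Gamma)$ and the estimate \eqref{eq:bae} of Assumption \ref{ass:Vh} gives $\min_{w_h\in V_h}\N{v-w_h}_{\LtG}\leq \Capprox h\N{v}_{H^1(\Gamma)}$. Chaining the three inequalities,
\beqs
\N{v-v_N}_{\LtG}\leq \Cqo'\Capprox h\N{v}_{H^1(\Gamma)}\leq \Cqo'\Capprox\Crel\, hk\,\N{v}_{\LtG},
\eeqs
and setting $\Cqo:=\Cqo'\Capprox$ yields \eqref{eq:rel_err1}. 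The new $\Cqo$, like $\Cqo'$ and $\Capprox$, depends only on $\Oi$, the spaces $(V_h)_{h>0}$, and $k_0$, and in particular not on $k$, $h$, or the choice between $A_k$ and $A_k'$, so the order of quantifiers in the statement is respected.

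There is no real obstacle in this corollary: all the substantive work lies in Theorem \ref{thm:main1} (establishing $k$-uniform quasi-optimality once $hk$ is small) and in Lemma \ref{lem:Crel} (the $O(k)$ bound on $\N{v}_{H^1(\Gamma)}$, taken from \cite{GaMaSp:22}). The only point requiring any care is that the three constants combine into a single $k$-independent constant, which they do because none of $\Cqo'$, $\Capprox$, $\Crel$ depends on $k$ -- the power of $k$ in \eqref{eq:rel_err1} comes entirely, and only, from the factor $k$ in Lemma \ref{lem:Crel}.
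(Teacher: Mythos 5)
Your proposal is correct and is exactly the argument the paper intends: the corollary follows by chaining the quasi-optimality bound of Theorem \ref{thm:main1}, the best-approximation estimate \eqref{eq:bae} of Assumption \ref{ass:Vh}, and the regularity bound of Lemma \ref{lem:Crel}, with the constant $\Capprox$ absorbed into the corollary's $\Cqo$. The paper only sketches this in the sentence preceding Lemma \ref{lem:Crel}, and your write-up fills in the same chain of inequalities with the correct bookkeeping of $k$-independent constants.
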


The bound \eqref{eq:rel_err1} shows that a prescribed relative error can be achieved with a choice of $h$ such that $hk\sim 1$. Indeed, given $\eps>0$, 
\beqs
\text{ if }\,\, hk \leq \min\left\{\eps (\Cqo \Crel)^{-1} , \Cppw\right\},
\quad\text{ then } \quad\N{v-v_N}_{\LtG}/\N{v}_{\LtG}\leq \eps.
\eeqs

\bre[General Dirichlet boundary conditions]\label{rem:general_Dirichlet}
The general exterior Dirichlet problem is:~given $k>0$ and $g_D \in H^{1/2}(\Gamma)$, find $u^S \in H^1_{\rm loc}(\Oe)$ such that $\Delta u^S +k^2 u^S=0$ in $\Oe$, $u^S= g_D$ on $\Gamma$, and $u^S$ satisfies the radiation condition \eqref{eq:src}.

For the indirect method, we pose the ansatz 
$u^S(x) = (\cDL_k -\ri k \cS_k)v(x)$ for 
$x\in \Omega^+$
and take the limit of this as $x$ approaches $\Gamma$ from $\Oe$ to obtain the equation $A_k v = g_D.$
Since $g_D \in H^{1/2}(\Gamma)$, this is a priori an equation in $H^{1/2}(\Gamma)$; however, since $A_k$ is bounded and invertible as an operator from $H^s(\Gamma)$ to itself for $0\leq s\leq 1$ \cite[Theorem 2.27]{ChGrLaSp:12}, and $H^{1/2}(\Gamma) \subset \LtG$, we can consider this equation in $\LtG$, and solve it using the Galerkin method as in \S\ref{sec:1.6}. 
In contrast, the exterior Dirichlet problem can only be solved by the direct method with the integral equation posed in $\LtG$ when $g_D \in H^1(\Gamma)$; see~\cite[Section 2.6]{ChGrLaSp:12}. 
\ere

\section{Discussions of the ideas behind the proof of Theorem \ref{thm:main1}}\label{sec:idea} 

The proof of Theorem \ref{thm:main1} consists of three ingredients.
\ben
\item A slight modification of a standard condition for quasi-optimality of the Galerkin method applied to operators that are a perturbation of the identity (see \eqref{eq:absG1alt} in Theorem \ref{thm:abstract} below), with this condition based on writing the Galerkin method as a projection method 
and using the result that if $\|T\|<1$ then  $I+T$ is invertible with $\|(I+T)^{-1}\|\leq (1- \|T\|)^{-1}$.
\item Bounds on the components of the boundary integral operators $S_k, \DL_k,$ and $\DL_k$ that have frequencies $> k$ (see Theorem \ref{thm:HFSD}) where the statement that a function has ``frequencies $> k$'' is understood by expanding the function in terms of eigenfunctions of the surface Laplacian on $\Gamma$ (see \S\ref{sec:func_calc}).
\een
We see in \S\ref{sec:proof} that these two ingredients prove the following result. 

\ble\label{lem:intermediate} 
Suppose $(V_h)_{h>0}$ satisfies Assumption \ref{ass:Vh}. 
For all $k_0>0$, there exists $C_1>0$ such that if $k\geq k_0$, $\operator$ is either $A_k$ or $A_k'$, and 
\beq\label{eq:idea1}
hk\big(1+\N{\operator^{-1}}_{\LtGt}\big)\leq C_1
\eeq
then, for all $f\in \LtG$, the Galerkin solution $v_N$ to \eqref{eq:Galerkin} exists, is unique, and satisfies 
\beq\label{eq:idea2}
\N{v-v_N}_{\LtG} \leq 2 \N{\operator^{-1}}_{\LtGt} \min_{w_N \in V_N}\N{v-w_N}_{\LtG}.
\eeq
\ele

The result of Theorem \ref{thm:main1} then follows from the third ingredient
(note that this is the only place where our arguments use the nontrapping assumption).
\ben
\item[3.] If $\Oi$ is non-trapping then, given $k_0>0$, there exists $C>0$ such that 
\beq\label{eq:inverse_nontrapping}
\|\operator^{-1}\|_{\LtGt}\leq C\quad \tfa k\geq k_0.
\eeq
\een

\paragraph{Discussion of Point 1.}
It is perhaps surprising that the simple condition from Theorem~\ref{thm:abstract}, combined with Points 2 and 3, gives a better result for the Galerkin method applied to $\operator$ (at least when $\Oi$ is nontrapping) than more-sophisticated conditions for quasi-optimality 
used in \cite{BuSa:06, BaSa:07, LoMe:11}, 
which are all ultimately based on the ideas in the ``Schatz argument'' in the finite-element setting; see \cite{Sc:74, Sa:06}.

\paragraph{Discussion of Point 2.}
The bounds on the high-frequency components of $S_k, \DL_k,$ and $\DL_k'$ in Theorem \ref{thm:HFSD} come from viewing these boundary integral operators as \emph{semiclassical pseudodifferential operators}. We do not need any of the details of these operators in this paper, but it is instructive to discuss briefly here how, on the one hand, using pseudodifferential operators to study boundary integral equations is completely standard, but, on the other hand, the full potential of these operators for studying Helmholtz problems with large $k$ has not been fully exploited.

Recall that the theory of standard pseudodifferential operators on a smooth surface $\Gamma$ can be viewed as a generalisation of Fourier analysis on the circle. 
The use of pseudodifferential properties in both the analysis and numerical analysis of boundary integral equations is both well established and current, see, e.g., the books 
\cite{SaVa:02, 
HsWe:08,
GwSt:18}.

A class of pseudodifferential operators exists that is tailor-made for studying problems where oscillations happen at a large frequency $k$; these are precisely \emph{semiclassical pseudodifferential operators} \cite{Zw:12}, \cite[Appendix E]{DyZw:19}. The adjective ``semiclassical'' essentially means ``high frequency'', and comes from the origin of this theory in the study of how classical dynamics arise from quantum mechanics in the high-energy limit (see, e.g., \cite[\S1]{Zw:12}).

Whereas $S_k$, $D_k$, and $D_k'$ are standard pseudodifferential operators (of order $-1$; see, e.g., \cite[\S9.2.2]{HsWe:08}, \cite[\S7]{SaVa:02}, \cite[Chapter 7, Section 11]{Ta:96}), they are \emph{not} semiclassical pseudodifferential operators. Instead, each is the sum of a semiclassical pseudodifferential operator and an operator acting only on frequencies $\leq k$ that transports mass between points on the boundary connected by rays;
this decomposition was recently established in \cite[Chapter 4]{Ga:19}, with \cite[Lemma 4.27]{Ga:19} explicitly writing out the decomposition when $\Gamma$ is curved. The estimates on boundary layer operators at high frequency in Theorem \ref{thm:HFSD} were then proved using the ideas from \cite[Chapter 4]{Ga:19} in \cite[Theorem 4.3]{GaMaSp:21N}.

Finally, we note that the assumption in \S\ref{sec:1.1} that $\Gamma$ is smooth is because the theory of pseudodifferential operators is simplest on smooth domains. 
In principle, Lemma \ref{lem:intermediate} holds when $\Gamma$ is $C^M$ for some $M>0$, and one could go through the arguments to determine a sufficiently-large value of $M$; alternatively one could use more sophisticated pseudodifferential techniques to lower the regularity further; see, e.g., \cite[Chapter 13]{Ta:97}.

\paragraph{Discussion of Point 3.}
 The estimate~\eqref{eq:inverse_nontrapping} is proved in  \cite[Theorem 1.13]{BaSpWu:16}
using the following decompositions of $A_k^{-1}$ and $(A_k')^{-1}$ \cite[Theorem 2.33]{ChGrLaSp:12},
\beq\label{eq:fav_formula}
A_k^{-1}= I - \ItD \big[\DtN-\ri k\big] \quad\tand \quad (A_k')^{-1}= I -   \big[\DtN-\ri k\big] \ItD.
\eeq
 Here, $\DtN$ is the Dirichlet-to-Neumann map for the Helmholtz equation $\Delta u^S+k^2u^S=0$ in $\Oe$ satisfying the Sommerfeld radiation condition \eqref{eq:src} and $\ItD$ is the map $g\mapsto u|_{\Gamma}$ where, given $g\in \LtG$, $u\in H^1(\Oi)$ is the solution of the \emph{interior impedance problem}
\beq\label{eq:IIP}
\Delta u+ k^2 u= 0\,\, \tin \Omega^-, \quad \partial_\nu u - \ri k u = g\,\, \ton \Gamma.
\eeq
The decompositions in \eqref{eq:fav_formula} imply that bounds on $A_k^{-1}$ and $(A_k')^{-1}$ can be obtained from $k$-explicit bounds on $\DtN$ and $\ItD$. These estimates are obtained in \cite{BaSpWu:16} for non-trapping $\Oi$ (following the proof in \cite[Theorem 4.3]{ChMo:08} of the analogous bound for $\Oi$ that are star-shaped with respect to a ball).

The presence of $\DtN$ in \eqref{eq:fav_formula} is expected since $\DtN$ is essentially the solution operator for the problem (and we are using the Galerkin method applied to $A_k$ or $A_k'$ to approximate this solution operator). The map $\ItD$ appears in \eqref{eq:fav_formula} since $A_k$ and $A_k'$ can also be used to solve the interior impedance problem; see, e.g., \cite[Theorem 2.30]{ChGrLaSp:12}.

We highlight that proving $k$-explicit bounds on exterior Helmholtz solution operators is a classic problem considered since the 1960's, with interest in the interior impedance problem \eqref{eq:IIP} arising more recently \emph{both} from this problem's role in determining the behaviour of $A_k$ and $A_k'$ \emph{and} because this problem is often used as a model problem in the numerical analysis of FEMs; see the literature reviews in \cite{ChSpGiSm:20}, \cite{LaSpWu:20} (for exterior problems) \cite[\S1.2]{Sp:14}, \cite[\S1.2]{BaSpWu:16} (for both exterior and interior problems) and \cite[Sections 1.1 and 1.4]{GaLaSp:21} (for interior problems).

When $\Oi$ is trapping,  $\|\operator^{-1}\|_{\LtGt}$ grows with $k$ (see \cite{ChSpGiSm:20, LaSpWu:20}). Thus, although \eqref{eq:idea1}, \eqref{eq:idea2} give a result about convergence of the $h$-BEM for $\Oi$ trapping, this result does not show that the $h$-BEM does not suffer from the pollution effect. 
The experiments in \cite[Figure 2]{GrLoMeSp:15} show that, at least for a certain form of mild trapping (so-called \emph{parabolic trapping}), the $h$-BEM does not suffer from the pollution effect, although proving this remains open.

\section{Formulation of the Galerkin method as a projection method and an abstract condition for quasi-optimality}\label{sec:Galerkin}

As in \S\ref{sec:1.2}, $V$ is a Hilbert space with dual $V'$, and we let $\altoperator:V \to V'$ be a continuous, invertible, linear operator. Later we restrict attention to the case when $\altoperator$ is a perturbation of the identity, i.e., $\altoperator = I+K$, and we apply these results with $\altoperator= 2 \operator$, with $\operator$ one of $A_k'$ and $A_k$ (since $A_k$ and $A_k'$ are perturbations of $\frac{1}{2}I$ \eqref{eq:DBIEs}).

Given \(f\in V'\), let $\Galerkinv$ be the solution of the variational problem
\begin{align}\label{eq:continuous_problem}
    \text{find }\Galerkinv \in V\tst\quad\langle\altoperator \Galerkinv,\Galerkinw\rangle_{V' \times V} =\langle f,\Galerkinw\rangle_{V'\times V} \quad\tfa \Galerkinw\in V,
\end{align}
i.e. $\Galerkinv=\altoperator^{-1}f$. 
Then, given $V_N\subset V$ closed, the Galerkin approximation to $\Galerkinv$ with respect to $V_N$, $\Galerkinv_N=:(\altoperator^{-1})_Nf$, is defined as the solution of the Galerkin equations
\begin{align}\label{eq:discrete_problem}
    \text{find }\Galerkinv_N \in V_N \tst\quad\langle\altoperator \Galerkinv_N,\Galerkinw_N\rangle_{V'\times V} =\langle f ,\Galerkinw_N\rangle_{V'\times V}\quad\tfa \Galerkinw_N\in V_N.
\end{align}
We now rewrite the equations~\eqref{eq:discrete_problem} using the orthogonal projection operator $P_N:V\to V_N$. Then, $(I-P_N)$ is the orthogonal projection onto the orthogonal complement of $V_{N}$ and, in particular,
\beqs
\N{(I-P_N)\Galerkinw}_V = \min_{\Galerkinw_N\in V_N}\N{\Galerkinw-\Galerkinw_N}_{V}.
\eeqs
The Galerkin equations \eqref{eq:discrete_problem} are then equivalent to the operator equation 
\beq\label{eq:Galerkin_P_N}
P_N \altoperator v_N = P_N f,\qquad v_N\in V_N,
\eeq
where we have used that $V$ is a Hilbert space to identify $V$ and $V'$ when applying $P_N$ to $\altoperator $ on the left. 
If $\altoperator= I+\pert$, then, since $v_N\in V_N$, \eqref{eq:Galerkin_P_N} simplifies to 
\beq\label{eq:Galerkin_P_N2}
(I+ P_N \pert) v_N = P_N f;
\eeq
see, e.g., \cite[\S3.1.3]{At:97}, \cite[\S13.6]{Kr:14}. Despite the fact that formally~\eqref{eq:Galerkin_P_N2} is posed on $V_N$, the operator $I+P_N\pert$ as an operator on $V$ maps $V_N\to V_N$ and hence we can study the operator $(I+P_N\pert)$ as a mapping $V\to V$.

\ble[Quasi-optimality in terms of the norm of the discrete inverse]\label{lem:discrete_inverse}
If $I+ P_N\pert : V\to V$ is invertible, then 
the Galerkin solution, $v_N$, solving~\eqref{eq:discrete_problem} exists, is unique, and satisfies
\beqs
\N{v-v_N}_V \leq \N{(I+P_N \pert)^{-1}}_{V \to V} \N{(I-P_N)v}_V.
\eeqs
\ele

\bpf
Since $I+P_N \pert:V\to V$ is invertible and $I+P_N\pert:V_N\to V_N$, the solution $v_N$ to \eqref{eq:Galerkin_P_N2} exists, lies in $V_N$, and is unique as an element of $V$. Then, by \eqref{eq:continuous_problem} and \eqref{eq:discrete_problem},
\beqs
(I+P_N \pert) (v-v_N) = (I+P_N \pert) v - P_N f  = v + P_N \pert v - P_N\big( (I+\pert)v\big)  = (I-P_N)v.
\eeqs
\epf

\begin{theorem}[Sufficient condition for quasi-optimality]\label{thm:abstract}
Let $\delta>0$. If $\altoperator= I+ \pert$ and
\beq\label{eq:absG1alt}
\N{ (I-P_N) \pert (I+\pert)^{-1} }_{V\to V} \leq 1-\delta,
\eeq
then the Galerkin solution $v_N$, solving \eqref{eq:Galerkin_P_N2},  exists, is unique, and satisfies 
\beq\label{eq:absG3}
\N{v-v_N}_V \leq \delta^{-1}\N{(I+\pert)^{-1}}_{V \to V} \N{(I-P_N)v}_V.
\eeq
\end{theorem}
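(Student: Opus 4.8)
The plan is to deduce the result from Lemma \ref{lem:discrete_inverse}: it suffices to show that $I + P_N \pert$ is invertible on $V$ and that $\N{(I+P_N\pert)^{-1}}_{V\to V} \leq \delta^{-1}\N{(I+\pert)^{-1}}_{V\to V}$, after which \eqref{eq:absG3} is immediate from the estimate in Lemma \ref{lem:discrete_inverse}.

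The key step is the algebraic factorisation
\[
I + P_N \pert = (I+\pert) - (I-P_N)\pert = \Big[ I - (I-P_N)\pert (I+\pert)^{-1} \Big] (I+\pert),
\]
valid as operators $V\to V$ (here I would use that $\altoperator = I+\pert$ is invertible by the standing hypothesis, so $(I+\pert)^{-1}$ makes sense, and simply expand the right-hand side, the $(I+\pert)^{-1}(I+\pert)$ cancelling). I would then invert each factor separately. The factor $I+\pert$ is invertible by assumption. For the bracketed factor, the hypothesis \eqref{eq:absG1alt} gives $\N{(I-P_N)\pert(I+\pert)^{-1}}_{V\to V}\leq 1-\delta < 1$, so by the Neumann series argument recalled in the discussion of Point 1 (if $\N{T}<1$ then $I+T$ is invertible with $\N{(I+T)^{-1}}\leq (1-\N{T})^{-1}$, applied here with $T = -(I-P_N)\pert(I+\pert)^{-1}$), the bracketed factor is invertible with inverse of norm at most $(1-(1-\delta))^{-1} = \delta^{-1}$.

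Composing, $I+P_N\pert$ is invertible with
\[
(I+P_N\pert)^{-1} = (I+\pert)^{-1}\Big[ I - (I-P_N)\pert (I+\pert)^{-1} \Big]^{-1},
\]
hence $\N{(I+P_N\pert)^{-1}}_{V\to V} \leq \delta^{-1}\N{(I+\pert)^{-1}}_{V\to V}$. Plugging this into Lemma \ref{lem:discrete_inverse} yields the existence and uniqueness of $v_N$ together with \eqref{eq:absG3}, since $\N{(I-P_N)v}_V = \min_{w_N\in V_N}\N{v-w_N}_V$.

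There is no real obstacle here: the argument is a one-line factorisation followed by a Neumann series. The only point requiring a little care is checking that the factorisation identity genuinely holds as an operator identity on all of $V$ (not merely on $V_N$), which is why I would write it out explicitly rather than reason informally on $V_N$; and one should note, as already observed after \eqref{eq:Galerkin_P_N2}, that $I+P_N\pert$ maps $V_N$ into $V_N$, so its invertibility on $V$ indeed delivers a well-defined Galerkin solution $v_N\in V_N$.
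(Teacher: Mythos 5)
Your proposal is correct and follows essentially the same route as the paper's proof: the same factorisation $I+P_N\pert = \bigl[I-(I-P_N)\pert(I+\pert)^{-1}\bigr](I+\pert)$, the same Neumann-series inversion of the bracketed factor with norm bound $\delta^{-1}$, and the same appeal to Lemma~\ref{lem:discrete_inverse}. Nothing is missing.
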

\bpf
The basis of the proof of \eqref{eq:absG3} is Lemma~\ref{lem:discrete_inverse} and the result that if $\|T\|<1$ then  $I+T$ is invertible with $\|(I+T)^{-1}\|\leq (1- \|T\|)^{-1}$. 
Indeed, 
\begin{align}\label{eq:right_left}
I+P_N \pert  = I+\pert  -(I-P_N)\pert  =  \Big( I-(I-P_N)\pert  (I+\pert )^{-1}\Big)(I+\pert ).
\end{align}
Therefore, if \eqref{eq:absG1alt} holds, then 
\beqs
(I+P_N \pert)^{-1} =(I+\pert )^{-1} \Big( I-(I-P_N)\pert  (I+\pert )^{-1}\Big)^{-1}.
\eeqs
Thus, by \eqref{eq:absG1alt},
$$
\|(I+P_N\pert^{-1})\|_{V\to V}\leq \delta^{-1}\|(I+\pert)^{-1}\|_{V\to V}.
$$
and the result \eqref{eq:absG3} follows from applying Lemma~\ref{lem:discrete_inverse}
\epf

\bre\label{rem:Galerkin}
An analogous result to Theorem \ref{thm:abstract} under the condition
\beq\label{eq:absG1_alt}
\N{ (I+\pert)^{-1}(I-P_N) \pert   }_{V\to V} <1,
\eeq
is stated in, e.g., \cite[Theorem 10.1]{Kr:14}, \cite[Theorem 3.1.1]{At:97}; this result was used in the $h$-BEM context in \cite{GrLoMeSp:15}, \cite[Lemma 3.3]{GaMuSp:19}.
Here we factor out $(I+\pert)$ from the right in \eqref{eq:right_left}, rather than the left, leading to \eqref{eq:absG1alt} rather than \eqref{eq:absG1_alt}.
\ere

\section{The high-frequency behaviour of the boundary integral operators $S_k$, $\DL_k$, and $\DL_k'$}\label{sec:func_calc}

\paragraph{Functions of the surface Laplacian defined via eigenfunction expansion.}
Let $\lambda_j$ be the eigenvalues of the surface Laplacian (a.k.a.~the Laplace-Beltrami operator) $-\Delta_\Gamma$, and let $\{u_{\lambda_j}\}_{j=1}^\infty$ be an orthonormal basis for $L^2(\Gamma)$ of eigenfunctions;
i.e., 
$$
(-\Delta_{\Gamma}-\lambda_j)u_{\lambda_j}=0\quad\tand\quad \N{u_{\lambda_j}}_{L^2(\Gamma)}=1;
$$
when $\Gamma$ is the unit circle, $\{u_{\lambda_j}\}_{j=1}^\infty$ can be taken to be 
$\{\frac{1}{\sqrt{2\pi}}\re^{\ri jt}\}_{j=-\infty}^\infty$; see \S\ref{sec:circle} below.

We then define functions of $-\Delta_\Gamma$ using expansions in this basis. Precisely,
for a function $f\in L^\infty(\mathbb{R})$ and $v\in L^2(\Gamma)$, 
\beq\label{eq:func_calc}
f(-\Delta_\Gamma)v:=\sum_{j=1}^\infty f(\lambda_j)( v,u_{\lambda_j})_{\LtG}u_{\lambda_j}.
\eeq
By taking norms and using orthonormality of the basis, we see that
\beq\label{eq:sup_norm_bound}
\N{f(-\Delta_\Gamma)}_{\LtGt}\leq \N{f}_{L^\infty(\mathbb{R})}.
\eeq

\paragraph{Frequency cut-offs defined as functions of the surface Laplacian.}

With $u_{\lambda_j}$ defined above, we say that ``a function $v$ has frequency $\geq M$'' if, for some $a_{\lambda_j}\in\mathbb{C}$
\beqs
v= \sum_{\lambda_j \geq M^2} a_{\lambda_j} u_{\lambda_j}.
\eeqs

For $\chi \in C_{\rm comp}^\infty(\Rea)$ with $\chi \equiv 1$ on $U\subset \mathbb{R}$, the operator $(I-\chi(-k^{-2}\Delta_\Gamma))$ therefore restricts to functions with frequencies outside the set $kU$. In particular, if $\chi \equiv 1$ in a neighbourhood of $[-1,1]$, then $(I-\chi(-k^{-2}\Delta_\Gamma))$ restricts to functions with frequencies $> k$. 

\begin{theorem}[The high-frequency behaviour of $S_k$, $D_k$, and $D_k'$]\label{thm:HFSD}
Suppose $\chi \in C_{\rm comp}^\infty(\mathbb{R})$ with $\chi \equiv 1$ in a neighborhood of $[-1,1]$. Then for all $k_0>0$ there exists $C>0$ such that for all $k\geq k_0$
\begin{equation}
\label{e:highFreq}
\begin{gathered}
\N{(I-\chi (-k^{-2}\Delta_\Gamma))\DL_k}_{L^2(\Gamma)\to H^1(\Gamma)}+\N{(I-\chi (-k^{-2}\Delta_\Gamma))\DL'_k}_{L^2(\Gamma)\to H^1(\Gamma)}\leq Ck,\\
\N{(I-\chi (-k^{-2}\Delta_\Gamma))S_k}_{L^2(\Gamma)\to H^1(\Gamma)}\leq C.
\end{gathered}
\end{equation}
\end{theorem}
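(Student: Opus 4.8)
\bpf[Proof proposal]
I would deduce \eqref{e:highFreq} from the description of $S_k$, $\DL_k$, and $\DL_k'$ as \emph{semiclassical} operators; indeed the estimates \eqref{e:highFreq} are precisely \cite[Theorem 4.3]{GaMaSp:21N} (which rests on \cite[Chapter 4]{Ga:19}), and what follows is a sketch of the mechanism. Set $\hsc := 1/k$, so that $\chi(-k^{-2}\Delta_\Gamma) = \chi(-\hsc^2\Delta_\Gamma)$, and write $\N{v}^2_{H^1_\hsc(\Gamma)} := \hsc^2\N{\nabla_\Gamma v}^2_{\LtG} + \N{v}^2_{\LtG}$. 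Since $k\geq k_0$ one has the elementary inequality $\N{v}_{H^1(\Gamma)}\leq Ck\N{v}_{H^1_\hsc(\Gamma)}$, so it suffices to prove the $\LtG\to H^1_\hsc(\Gamma)$ bounds $\N{(I-\chi(-\hsc^2\Delta_\Gamma))S_k}\leq C\hsc$ and $\N{(I-\chi(-\hsc^2\Delta_\Gamma))\DL_k} + \N{(I-\chi(-\hsc^2\Delta_\Gamma))\DL_k'}\leq C$, and then multiply through by $k$.

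The first ingredient is the decomposition of \cite[Lemma 4.27]{Ga:19}: each of $S_k$, $\DL_k$, $\DL_k'$ is the sum of (i) a semiclassical pseudodifferential operator of order $-1$ --- carrying an extra factor $\hsc$ in the case of $S_k$, and no such factor for $\DL_k$ and $\DL_k'$, the latter because the normal derivative in their kernels falls on the oscillation $\exp(\ri k\nxy)$ and brings down a factor $k=\hsc^{-1}$; this discrepancy in $\hsc$-weights is exactly what produces the extra factor $k$ in the first line of \eqref{e:highFreq} --- and (ii) a ``ray-transport'' term whose Schwartz kernel is an oscillatory integral associated to the reflection relation on $\Gamma$. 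The decisive microlocal feature of term (ii) is that it acts only on frequencies $\leq k$ (its semiclassical wavefront set lies over $\{|\xi'|\leq 1\}$ in the rescaled cotangent variables), because the tangential part along $\Gamma$ of $\hsc^{-1}\,d_x\nxy$ has length at most $k$.

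Now multiply on the left by $I-\chi(-\hsc^2\Delta_\Gamma)$. Since $\chi\equiv 1$ on a neighbourhood of $[-1,1]$, this operator is microlocally supported in $\{|\xi'|^2\geq 1+c\}$ for some $c>0$. Hence (a) its composition with term (ii) is smoothing and $\mathcal O(\hsc^\infty)$ as a map $\LtG\to H^1(\Gamma)$, by non-stationary phase / disjointness of microsupports; while (b) term (i) maps $\LtG\to H^1_\hsc(\Gamma)$ with norm $\leq C$ (respectively $\leq C\hsc$ for $S_k$), being (a multiple of) an order-$(-1)$ semiclassical pseudodifferential operator, and $I-\chi(-\hsc^2\Delta_\Gamma)$ has $\LtG\to\LtG$ and $H^1_\hsc(\Gamma)\to H^1_\hsc(\Gamma)$ norms $\leq \N{1-\chi}_{L^\infty}$ by \eqref{eq:sup_norm_bound} (it commutes with $(1-\hsc^2\Delta_\Gamma)^{1/2}$, so its weighted-Sobolev norm equals its $\LtG$ norm). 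Composing the bounds in (a) and (b) gives the two $\LtG\to H^1_\hsc(\Gamma)$ estimates above, and multiplication by $k$ yields \eqref{e:highFreq}.

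The main obstacle is the input from \cite{Ga:19}, namely the decomposition of \cite[Lemma 4.27]{Ga:19} into a semiclassical pseudodifferential part and a ray-transport part acting only on frequencies $\leq k$; granted that, the rest is routine semiclassical calculus, and the only delicate point is the bookkeeping of the $\hsc$-weights --- one power higher for $\DL_k,\DL_k'$ than for $S_k$ --- which is exactly what separates the two lines of \eqref{e:highFreq}.
\epf
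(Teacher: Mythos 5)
Your proposal takes essentially the same route as the paper: the paper's ``proof'' of Theorem \ref{thm:HFSD} is simply a citation to \cite[Theorem 4.4]{GaMaSp:21N} (you cite Theorem 4.3, but it is the same result/circle of results), resting on the decomposition of \cite[Lemma 4.27]{Ga:19} into a semiclassical pseudodifferential part plus a ray-transport part acting only on frequencies $\leq k$, which is exactly the mechanism you sketch and which the paper itself describes in \S\ref{sec:idea}. Your $\hsc$-weight bookkeeping and the reduction via $\N{v}_{H^1(\Gamma)}\leq Ck\N{v}_{H^1_\hsc(\Gamma)}$ are consistent with how the cited result is stated and used.
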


By the discussion above, we see that the bounds in \eqref{e:highFreq} are bounds on the outputs of $\DL_k, \DL_k'$, and $S_k$ with frequencies $> k$.

\

\bpf[References for the proof of Theorem \ref{thm:HFSD}]
This is proved in \cite[Theorem 4.4]{GaMaSp:21N}; see also \cite[Theorem 3.1, Remark 4.2]{GaMaSp:22}; we note that the key ingredient \cite[Lemma 3.10]{GaMaSp:21N} is a simplified version of \cite[Lemma 4.27]{Ga:19}, and the semiclassical analogue of 
\cite[Chapter 7, \S11]{Ta:96} and \cite[Theorem 8.4.3]{HsWe:08}.
\epf

\begin{lemma}[Smoothing property of compactly-supported functions of $-k^{-2}\Delta_\Gamma$]
\label{l:basicElliptic}
Suppose that $f\in L^\infty_{\rm comp}(\mathbb{R})$. Then for all $s\geq 0$ there exists $C_{s, f}>0$ such that
\beq\label{eq:Sobolev_bound}
\N{f(-k^{-2}\Delta_\Gamma)}_{L^2(\Gamma)\to H^s(\Gamma)}\leq C_{s,f}k^s \quad \tfa k>0.
\eeq
\end{lemma}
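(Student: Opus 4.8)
The plan is to reduce the claimed bound to the sup-norm bound \eqref{eq:sup_norm_bound} by introducing the regularising factor $(1-k^{-2}\Delta_\Gamma)^{s/2}$, which is exactly the operator whose $L^2(\Gamma)\to L^2(\Gamma)$ action converts an $H^s(\Gamma)$ estimate into an $L^2(\Gamma)$ estimate up to scaling by powers of $k$. More precisely, since $-\Delta_\Gamma$ has the same eigenfunctions $u_{\lambda_j}$ as the functional calculus in \eqref{eq:func_calc}, the norm $\N{w}_{H^s(\Gamma)}$ is equivalent (with constants depending only on $s$ and $\Gamma$) to $\N{(1-\Delta_\Gamma)^{s/2}w}_{L^2(\Gamma)}$; this is just the definition of the Sobolev norm on a closed manifold via the Laplace--Beltrami spectral decomposition. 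Hence
\[
\N{f(-k^{-2}\Delta_\Gamma)}_{L^2(\Gamma)\to H^s(\Gamma)} \leq C_s \N{(1-\Delta_\Gamma)^{s/2}f(-k^{-2}\Delta_\Gamma)}_{L^2(\Gamma)\to L^2(\Gamma)},
\]
and since all the operators involved are functions of $-\Delta_\Gamma$, they commute and this right-hand side equals $C_s\N{g_k(-k^{-2}\Delta_\Gamma)}_{L^2(\Gamma)\to L^2(\Gamma)}$ where $g_k(t) := (1+k^2 t)^{s/2} f(t)$ (using $(1-\Delta_\Gamma)^{s/2}$ acts on $u_{\lambda_j}$ by $(1+\lambda_j)^{s/2}$ and $f(-k^{-2}\Delta_\Gamma)$ by $f(k^{-2}\lambda_j)$, so with $t=k^{-2}\lambda_j$ the combined multiplier is $(1+k^2 t)^{s/2}f(t)$).

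By \eqref{eq:sup_norm_bound}, this is bounded by $C_s\N{g_k}_{L^\infty(\mathbb{R})}$, so it remains to show $\N{g_k}_{L^\infty(\mathbb{R})} \leq C_{s,f}\, k^s$ for all $k>0$. This is where the compact support of $f$ is used: let $[-M,M]\supseteq \supp f$. For $|t|\le M$ we have $(1+k^2|t|)^{s/2}\le (1+k^2 M)^{s/2}$, and since $k>0$ this is $\le (1+M)^{s/2}(1+k^2)^{s/2}$, which for $s\ge 0$ is bounded by a constant (depending on $s,M$) times $\max\{1,k^s\}$; absorbing the $k\to 0$ regime into the constant (or, if one prefers the cleaner statement, noting $(1+k^2 M)^{s/2}\le (1+M)^{s/2}\max\{1,k\}^s$), we get $\N{g_k}_{L^\infty}\le C_{s,f}\,k^s$ for $k\ge 1$ and $\N{g_k}_{L^\infty}\le C_{s,f}$ for $k\le 1$; combining gives the claim with a single constant $C_{s,f}$ valid for all $k>0$ (since the bound $\le C_{s,f}k^s$ is only informative, and needed, for large $k$, and for bounded $k$ any constant will do after enlarging $C_{s,f}$). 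Outside $\supp f$, $g_k$ vanishes, so the sup is attained on $[-M,M]$.

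The argument is essentially routine once the reduction is set up; there is no real obstacle. The only point requiring a moment's care is the bookkeeping of the small-$k$ versus large-$k$ regimes in the final multiplier estimate: the factor $(1+k^2t)^{s/2}$ is $\le C_{s,M}$ uniformly for $k\le 1$ but grows like $k^s$ for $k\ge 1$, so one should state the bound as $C_{s,f}\max\{1,k^s\}$ or simply note that, since the lemma is only ever applied for $k$ bounded below (as in Theorem \ref{thm:HFSD}), the stated form $C_{s,f}k^s$ for all $k>0$ holds after enlarging the constant to dominate the $k\le 1$ contribution. I would also remark that the same computation shows the operator is in fact $O(k^{-N})$ for any $N$ whenever $0\notin\supp f$ and one measures into $L^2$, but that sharper statement is not needed here.
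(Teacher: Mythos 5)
Your argument is correct and is essentially the paper's: both proofs reduce the estimate to a sup-norm bound on a spectral multiplier that, by the compact support of $f$, is supported in $\{\lambda_j\leq Mk^2\}$, where the $H^s$ weight $(1+\lambda_j)^{s/2}$ costs at most $(1+Mk^2)^{s/2}\lesssim k^s$. The only real difference is that you take the equivalence $\N{w}_{H^s(\Gamma)}\simeq\N{(1-\Delta_\Gamma)^{s/2}w}_{L^2(\Gamma)}$ as the \emph{definition} of the Sobolev norm, whereas the paper defines $H^1(\Gamma)$ via the surface gradient (and higher norms via charts), so this equivalence is a theorem there rather than a definition --- it is exactly what the paper's elliptic-regularity step (for even $s$) plus interpolation (for the remaining $s$) supplies; your phrasing outsources rather than eliminates that work, although for the case $s=1$ actually used later the identity $\N{\nabla_\Gamma w}_{L^2(\Gamma)}^2=(-\Delta_\Gamma w,w)_{L^2(\Gamma)}=\sum_j\lambda_j|(w,u_{\lambda_j})_{L^2(\Gamma)}|^2$ makes it immediate. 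Your point about the small-$k$ regime is well taken and applies equally to the paper's own proof (which yields $C(1+k^{2\ell})$, not $Ck^{2\ell}$): when $s>0$ and $f(0)\neq 0$ the constant eigenfunction forces $\N{f(-k^{-2}\Delta_\Gamma)}_{L^2\to H^s}\geq |f(0)|$, so the bound $C_{s,f}k^s$ cannot hold as $k\to 0$ and the honest statement is $C_{s,f}\max\{1,k^s\}$; this is immaterial since the lemma is only invoked for $k\geq k_0$. One small correction to your closing aside: if $0\notin\supp f$ the operator is \emph{not} $O(k^{-N})$ on $L^2(\Gamma)\to L^2(\Gamma)$ (its norm there is comparable to $\N{f}_{L^\infty}$ once eigenvalues populate $k^2\supp f$); the $O(k^{-N})$ gain holds only when measuring from a negative-order space such as $H^{-N}(\Gamma)$ into $L^2(\Gamma)$.
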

\begin{proof}
By elliptic regularity, given $\ell>0$ there exists $C_\ell$ such that for all $v$
$$
\N{v}_{H^{2\ell}(\Gamma)}\leq C_{\ell}\Big(\N{(-\Delta_\Gamma)^\ell v}_{L^2(\Gamma)}+\N{v}_{L^2(\Gamma)}\Big);
$$
this follows from interior regularity for second-order elliptic operators with variable coefficients; see, e.g.,~\cite[Section 6.3.1]{Ev:98}. Thus 
\beq\label{eq:ellip_temp1}
\N{f(-k^{-2}\Delta_\Gamma)v}_{H^{2\ell}(\Gamma)}\leq C_{\ell}\Big(\|(-\Delta_\Gamma)^\ell f(-k^{-2}\Delta_\Gamma) v\|_{L^2(\Gamma)}+\N{f(-k^{-2}\Delta_\Gamma)v}_{L^2(\Gamma)}\Big).
\eeq
By \eqref{eq:sup_norm_bound}, the last term on the right-hand side of \eqref{eq:ellip_temp1} is bounded by $C\|v\|_{\LtG}$ for $C$ depending on $f$ but independent of $k$. For the first term on the right-hand side  of \eqref{eq:ellip_temp1} we use that fact that
$s^\ell f(s)\in L^\infty$ (since $f$ has compact support) to see that
\begin{align*}
\|(-\Delta_\Gamma)^\ell f(-k^{-2}\Delta_{\Gamma})\|_{\LtGt}
=k^{2\ell}\|(-k^{-2}\Delta_\Gamma)^\ell f(-k^{-2}\Delta_{\Gamma})\|_{\LtGt}
&\leq k^{2\ell}\|s^\ell f(s)\|_{L^\infty}\\
&\leq \widetilde{C}_\ell k^{2\ell},
\end{align*}
for some $\widetilde{C}_\ell>0$. Using these bounds in \eqref{eq:ellip_temp1} we obtain the bound \eqref{eq:Sobolev_bound} for even $s$. The bound for odd $s$ then follows by interpolation (see, e.g., \cite[Theorems B.2]{Mc:00}) using the fact that $H^s(\Gamma)$ is an interpolation scale (see, e.g., \cite[Theorem B.11]{Mc:00}).
\end{proof}

\section{Proof of Theorem \ref{thm:main1}}\label{sec:proof}

It is sufficient to prove Lemma \ref{lem:intermediate}, since Theorem \ref{thm:main1} then follows from the bound \eqref{eq:inverse_nontrapping}.

As described in \S\ref{sec:idea}, we use Theorems \ref{thm:abstract} and \ref{thm:HFSD}.
We apply the former with $\altoperator = 2\operator$, so that $\pert= 2\operator-I$, and $\delta=1/2$. Thus, we only need to prove that there exists $C_1>0$ 
(independent of $h$ and $k$) such that if \eqref{eq:idea1} holds then 
\beqs
\N{(I-P_N) (2\operator-I) (2\operator)^{-1}}_{\LtGt} \leq \tfrac{1}{2}.
\eeqs
By the bound \eqref{eq:bae} from Assumption \ref{ass:Vh}, it is sufficient to show that there exists $C_1>0$ 
(independent of $h$ and $k$) such that if \eqref{eq:idea1} holds then 
\beqs
h\Capprox \N{(2\operator-I) (2\operator)^{-1}}_{\LtG\to \HoG} \leq \tfrac{1}{2}.
\eeqs
We therefore only need to show that 
\beq\label{eq:flow1}
\N{(2\operator-I)(2\operator)^{-1}}_{\LtG\to\HoG} \leq C_2 k \big( 1 + \N{\operator^{-1}}_{\LtGt}\big),
\eeq
for some $C_2>0$ (independent of $h$ and $k$),
and then the result holds with $C_1:= (2 \Capprox C_2)^{-1}$.

To prove~\eqref{eq:flow1}, let $\chi \in C_{\rm comp}^\infty(\mathbb{R})$ with $\chi\equiv 1$ in a neighborhood of $[-1,1]$. 
Since $1= \chi + (1-\chi)$,
\begin{align}\nonumber
(2\operator -I)(2\operator)^{-1}&=\chi(-k^{-2}\Delta_\Gamma)(2\operator -I)(2\operator)^{-1}+\big(I-\chi(-k^{-2}\Delta_\Gamma)\big)(2\operator-I)(2\operator)^{-1}\\
&=\chi(-k^{-2}\Delta_\Gamma)(I -(2\operator)^{-1})+\big(I-\chi(-k^{-2}\Delta_\Gamma)\big)(2\operator-I)(2\operator)^{-1}.\label{eq:lateral1}
\end{align}
To deal with the first term on the right-hand side of \eqref{eq:lateral1}, we use Lemma~\ref{l:basicElliptic} applied with $f=\chi$ to find that
\begin{align}\nonumber
\N{\chi(-k^{-2}\Delta_\Gamma)(I-(2\operator)^{-1})}_{\LtG\to \HoG}&\leq \N{\chi(-k^{-2}\Delta_\Gamma)}_{\LtG\to \HoG}\big(1+\N{(2\operator)^{-1}}_{L^2\to L^2}\big)\\
&\leq C_3 k(1+\|\operator^{-1}\|_{L^2\to L^2}),\label{eq:lateral_flow1}
\end{align}
for some $C_3>0$ (independent of $h$ and $k$).
We now consider the second term on the right-hand side of \eqref{eq:lateral1} when $\operator=A_k$; the proof when $\operator=A_k'$ follows in exactly the same way, just replacing $D_k'$ by $D_k$. By the definition of $A_k$ \eqref{eq:DBIEs} and Theorem \ref{thm:HFSD},
\begin{align}\nonumber
&\N{(I-\chi(-k^{-2}\Delta_\Gamma))(2\operator-I)(2\operator)^{-1}}_{\LtG\to \HoG} \\ \nonumber
&\hspace{4cm}=\N{(I-\chi(-k^{-2}\Delta_\Gamma))(\DL_k - \ri kS_k)(2 A_k)^{-1}}_{\LtG\to \HoG} \\ \nonumber
&\hspace{4cm}\leq \N{(I-\chi(-k^{-2}\Delta_\Gamma))(\DL_k - \ri k S_k)}_{\LtG\to \HoG} \tfrac{1}{2} \N{A_k^{-1}}_{\LtGt}\\
&\hspace{4cm}\leq C_4 k \N{A_k^{-1}}_{\LtGt},\label{eq:lateral_flow2}
\end{align}
for some $C_4>0$ (independent of $h$ and $k$). Combining \eqref{eq:lateral_flow1} and \eqref{eq:lateral_flow2} we obtain \eqref{eq:flow1}, and the proof is complete.

\appendix
\section{
A simple proof of Theorem \ref{thm:main1} when $\Gamma$ is the unit circle}\label{sec:circle}

Ultimately, the most flexible tools to study the large-$k$ behaviour of Helmholtz boundary integral operators come from
 semiclassical analysis. Nevertheless, in the special case when $\Gamma$ is the unit circle, Theorem \ref{thm:main1} can be proved using only results about Fourier series and the asymptotics of Bessel and Hankel functions. The advantage of the latter proof is that it only uses classical tools of applied mathematics; furthermore, since we write this proof mirroring the general proof in \S\ref{sec:proof}, we hope it makes the ideas in \S\ref{sec:proof} clearer.

\subsection{Recap of Fourier-series results.} Suppose $\Gamma$ is the unit circle, with parametrisation $\gamma(t) = (\cos t,\sin t)$ for $t\in [0,2\pi)$. 
With this parametrisation, $L^2(\Gamma)$ is isometrically isomorphic to $L^2(0,2\pi)$. Given $v \in L^2(0,2\pi)$, define the $n$th Fourier coefficient of $v$ by
\beqs
\widehat{v}_n:= \frac{1}{\sqrt{2\pi}}\big( v , \re^{\ri n \cdot}\big)_{L^2(0,2\pi)} =\frac{1}{\sqrt{2\pi}}\int_0^{2\pi} \re^{-\ri nt} v(t) \, \rd t,
\quad
\text{ so that } 
\quad v(t) = \sum_{n=-\infty}^\infty \widehat{v}_n  \frac{\re^{\ri n t}}{\sqrt{2\pi}}
\eeqs
as an $L^2$ function.
Parseval's theorem states that 
\beq\label{eq:Parseval}
\N{v}^2_{L^2(0,1)} = \sum_{n=-\infty}^\infty |\widehat{v}_n|^2
\quad\text{ and thus }\quad
\|v\|_{H^1(0,1)}^2:=\sum_{m=-\infty}^\infty (1+m^2)|\widehat{v}_m|^2.
\eeq

\subsection{Results about the eigenvalues of $2A_k$.}

When $\Gamma$ is a circle, $A_k= A_k'$ since $\DL_k=\DL'_k$; this follows from the definitions of $\DL_k$ and $\DL'_k$ and the geometric property that $(x-y)\cdot \nu(y) =(x-y)\cdot \nu(x)$ for $x,y$ on a circle.

\ble[Expression for eigenvalues of $2A_k$ in terms of Bessel and Hankel functions]
If 
\beq
\label{e:lambda}
\lambda_m(k):= \pi k H_{|m|}^{(1)}(k) \Big( \ri J_{|m|}'(k) + J_{|m|}(k)\Big).
\eeq
then 
\beq\label{eq:2A_expansion}
(2A_k v)(t) = \frac{1}{\sqrt{2\pi}}\sum_{m=-\infty}^\infty \lambda_m(k) \widehat{v}_m \re^{\ri m t}.
\eeq
\ele

\bpf[References for the proof]
See, e.g., \cite[\S4 (in particular Equation 4.4)]{Kr:85} or \cite[Lemma 4.1]{DoGrSm:07}.
\epf

\begin{theorem}[Sign property of eigenvalues of $2A_k$ on unit circle]\label{thm:DGS}

If $\Gamma$ is the unit circle, then there exists $k_0>0$ such that, for all $m$ and for all $k\geq k_0$, 
\beqs
\Re\lambda_m(k)\geq 1.
\eeqs
\end{theorem}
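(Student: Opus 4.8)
The plan is to diagonalise everything, reduce the claim to a concrete inequality for real Bessel functions, and then estimate that inequality regime-by-regime in $m/k$ using classical asymptotics of Bessel and Airy functions.

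First I would make $\Re\lambda_m$ explicit. Writing $H^{(1)}_{|m|}=J_{|m|}+\ri Y_{|m|}$ in \eqref{e:lambda} and taking real parts (and writing $n=|m|$, so we may assume $m\ge 0$) gives
\[
\Re\lambda_m(k)=\pi k\big(J_{n}(k)^2-J_{n}'(k)Y_{n}(k)\big),
\]
so the claim becomes $J_n(k)^2-J_n'(k)Y_n(k)\ge \tfrac{1}{\pi k}$ for all $k\ge k_0$ and all integers $n\ge 0$. Using the Wronskian $J_n(k)Y_n'(k)-J_n'(k)Y_n(k)=\tfrac{2}{\pi k}$ this can be rewritten in the two equivalent forms
\[
\Re\lambda_m(k)=1+\pi k J_n(k)^2-\tfrac{\pi k}{2}\tfrac{d}{dk}\big(J_n(k)Y_n(k)\big)=2+\pi k J_n(k)\big(J_n(k)-Y_n'(k)\big),
\]
both of which isolate a clean additive constant and a correction term that one must show does not undercut it; since $\pi k J_n(k)^2\ge 0$ always, it even suffices to prove $\tfrac{d}{dk}(J_nY_n)(k)\le 2J_n(k)^2$.

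Next I would fix a small $\delta>0$ and split into three ranges: (a) $0\le m\le(1-\delta)k$; (b) $|m-k|\le\delta k$; (c) $m\ge(1+\delta)k$. In range (a) I would insert the Debye (order-below-argument) asymptotics for $J_n,Y_n$; using that the Debye phase $\xi$ obeys $d\xi/dk=\sqrt{1-n^2/k^2}=:\sin\beta$, one gets $J_n'(k)\approx -\sqrt{2\sin\beta/\pi k}\,\sin\xi$, and hence $J_n(k)^2-J_n'(k)Y_n(k)\approx \tfrac{2}{\pi k}\big(\sin^{-1}\!\beta\,\cos^2\xi+\sin^2\xi\big)\ge \tfrac{2}{\pi k}$ — the oscillatory cross terms cancel, leaving a positive main term, so $\Re\lambda_m\ge 2-o(1)$, with the Debye remainder of size $O((k\sin^3\beta)^{-1})=O((k\delta^{3/2})^{-1})$ uniformly in this range. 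In range (c) I would use the Debye (order-above-argument) asymptotics: $\pi k J_n(k)^2\ge 0$, while $\tfrac{d}{dk}(J_nY_n)(k)\approx -\tfrac{k}{\pi(n^2-k^2)^{3/2}}<0$ with sign-definite leading behaviour and a remainder that is a strictly lower power of $k$; hence $\Re\lambda_m\ge 1$ there.

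The main obstacle is range (b), the glancing/transition region, where the Debye expansions degenerate. Here I would use Olver's uniform asymptotic expansions (in terms of Airy functions, with their explicit error bounds), which are valid uniformly for $m$ near $k$; substituting the leading terms into $\pi k(J_n^2-J_n'Y_n)$, the amplitude/$z$-dependence of the $J_n'Y_n$ contribution cancels and one obtains, with $u$ the Airy variable ($u=n^{2/3}\zeta$, $u\approx -2^{1/3}(k-m)m^{-1/3}$),
\[
\Re\lambda_m(k)=c(z)\,m^{1/3}\Ai(u)^2-2\pi\Ai'(u)\Bi(u)+(\text{small}),\qquad c(z)>0,\ c(1)=\pi 2^{2/3}.
\]
The first term is nonnegative with coefficient of order $m^{1/3}$; for the second, the Airy Wronskian $\Ai(u)\Bi'(u)-\Ai'(u)\Bi(u)=\tfrac1\pi$ forces $-2\pi\Ai'(u)\Bi(u)=2$ at every zero of $\Ai$, the identity $-2\pi\Ai'(0)\Bi(0)=1$ (equivalently $\Gamma(\tfrac13)\Gamma(\tfrac23)=2\pi/\sqrt3$) pins the transition value, and on the evanescent side $-2\pi\Ai'(u)\Bi(u)\to1$ while on the oscillatory side it equals $2\sin^2(\tfrac23(-u)^{3/2}-\tfrac\pi4)\in[0,2]$. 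Thus wherever $-2\pi\Ai'(u)\Bi(u)$ dips below $1$ one has $\Ai(u)$ bounded away from $0$ (zeros of $\Ai$ and $\Ai'$, $\Bi$ do not coincide), so the $m^{1/3}\Ai(u)^2$ term dominates and forces $\Re\lambda_m\ge1$ once $m\ge m_0(\delta)$; these oscillatory Airy asymptotics also match the Debye ones at the edges of (b), so there is no gap with (a) and (c).

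Finally, the finitely many orders $m<m_0$ are handled by the fixed-order large-$k$ asymptotics of $J_m,Y_m$ (here $\pi k M_m(k)^2\to 2$, where $M_m^2=J_m^2+Y_m^2$, makes the corresponding statement transparent), and $k_0$ is then chosen large enough to absorb all the remainder terms from (a), (b), (c) simultaneously. The delicate point — and where I expect to spend the most care — is making the Olver error bounds explicit and uniform enough across range (b), and checking the above sign/nondegeneracy analysis of the Airy combination $c(z)m^{1/3}\Ai(u)^2-2\pi\Ai'(u)\Bi(u)$ on all of $u\in\mathbb{R}$.
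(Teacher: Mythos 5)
The paper does not actually prove Theorem \ref{thm:DGS}; it cites \cite[Theorem 4.2]{DoGrSm:07}, whose proof is likewise via asymptotics of Bessel and Hankel functions split by the ratio $m/k$, so your strategy is essentially the one used in the cited reference rather than a new route. Your reductions check out: $\Re\lambda_m=\pi k\bigl(J_n(k)^2-J_n'(k)Y_n(k)\bigr)$, the Wronskian rewritings $\Re\lambda_m=1+\pi kJ_n^2-\tfrac{\pi k}{2}\tfrac{d}{dk}(J_nY_n)=2+\pi kJ_n(J_n-Y_n')$, the Debye computation giving $\tfrac{2}{\pi k}\bigl(\sin^{-1}\beta\cos^2\xi+\sin^2\xi\bigr)\ge\tfrac{2}{\pi k}$ in range (a), the cancellation of amplitudes in $\pi k(-J_n'Y_n)\to-2\pi\Ai'(u)\Bi(u)$, the value $-2\pi\Ai'(0)\Bi(0)=1$, the value $2$ at zeros of $\Ai$, and $c(1)=\pi 2^{2/3}$ are all correct.

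There is, however, one step in range (b) that would fail as written. You claim that ``wherever $-2\pi\Ai'(u)\Bi(u)$ dips below $1$ one has $\Ai(u)$ bounded away from $0$, so the $m^{1/3}\Ai(u)^2$ term dominates.'' This is a sound mechanism on the oscillatory side $u<0$ and on compact sets, but on the evanescent side $u>0$ the term $m^{1/3}\Ai(u)^2$ is \emph{exponentially small} (of size $m^{1/3}u^{-1/2}\re^{-\frac43 u^{3/2}}$), so it cannot absorb any deficit of $-2\pi\Ai'(u)\Bi(u)$ below $1$ for large $u>0$; since $u$ ranges up to $\sim m^{2/3}\delta$ in your range (b), this is not a compactness issue. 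What saves the argument is that $-2\pi\Ai'(u)\Bi(u)\ge 1$ for all $u\ge 0$: using $\Ai'\Bi=\tfrac12\bigl(\tfrac{d}{du}(\Ai\Bi)-\tfrac1\pi\bigr)$ and $\Ai(u)\Bi(u)\sim\tfrac{1}{2\pi\sqrt u}$ one finds $-2\pi\Ai'(u)\Bi(u)=1+\tfrac14u^{-3/2}+O(u^{-3})$, i.e.\ the limit $1$ is approached \emph{from above}, and the one-sided inequality on all of $[0,\infty)$ (equivalently $\tfrac{d}{du}(\Ai\Bi)\le -\tfrac{1}{\pi}+2\,\Ai\,\Bi\cdot 0$, or more usefully the monotonic decrease of $\Ai\Bi$ on $[0,\infty)$, which follows from a Nicholson-type integral representation of the product) needs to be stated and proved as a separate lemma; the same one-sidedness is what you are implicitly using in range (c), where your positive correction $\tfrac{k^2}{2(n^2-k^2)^{3/2}}$ must also be shown to dominate the Debye remainder rather than merely being ``a strictly lower power of $k$.'' With that lemma added and the Olver error bounds made uniform (which you correctly flag as the labour-intensive part), the proof goes through.
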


\bpf[Reference for the proof]
This is proved in \cite[Theorem 4.2]{DoGrSm:07} using asymptotics of Bessel and Hankel functions.
\epf

\

The only other rigorous result about the eigenvalues $\lambda_m(k)$ that we need is the following.

\ble[Asymptotics of $\lambda_m(k)$ as $m\tendi$ with $m>k$]
\label{lem:lambda_bound1}
Let $z:=k/m$. Then, for all $\delta>0$ there exists $C>0$ such that for $0<z<1-\delta$,
\beqs
|\lambda_m(k) -1|\leq Cz.
\eeqs
\ele

\bpf
We first review some standard facts about
uniform asymptotics for the Bessel functions $J_m(mz)$ and $H^{(1)}_m(mz)$ \cite{Ol:54}, \cite[Section 10.20]{Di:22}, where $m\geq 0$ and $z<1-\delta$.
We define the decreasing bijection  $(0,1) \ni z \mapsto \zeta(z) \in (0,\infty)$ by
\begin{equation*}
 \zeta := 
 \frac 32 \bigg(\int_z^1 t^{-1} (1-t^2)^{1/2}\rd t\bigg)^{2/3},
 \end{equation*}
and recall the definition of the Airy function, $\Ai$,
\begin{equation*}
 \Ai(x) := \frac 1 \pi \int_0^\infty \cos\left(\dfrac {t^3}3 + xt\right)\rd t.
\end{equation*}
By \cite[Section 9.7]{Di:22}, for $|\arg (x)|<\pi-\delta$, 
\begin{equation}\label{e:airypos}\begin{gathered}
\Ai(x)=\exp\big(- \tfrac 23 x^{3/2}\big)\bigg(\frac{1}{2\sqrt{\pi}}x^{-1/4} +O(x^{-7/4})\bigg), \\
 \Ai'(x) =  \exp\big(- \tfrac 23 x^{3/2}\big)\bigg(-\frac{1}{2\sqrt{\pi}}x^{1/4}+O(x^{-5/4})\bigg), 
\end{gathered}\end{equation}
where the branch cut is taken on $x\in (-\infty, 0)$.  Moreover, by~\cite[Section 9.9]{Di:22}, $|\Ai(x)|, |\Ai'(x)|>0$ for $x\notin (-\infty,0)$. 
Then, by \cite[Section 10.20]{Di:22}, uniformly for $m\geq 1$ and $0<z<1$,
\begin{equation}\label{e:jyaibi}\begin{split}
J_m(m z) &= \left(\frac{4 \zeta} {1-z^2}\right)^{1/4} \left(m^{-1/3} \Ai(m^{2/3}\zeta)+ O\Big(m^{-5/3}\zeta^{-1/2}\Ai'(m^{2/3}\zeta)\Big) \right),\\
J'_m(m z) &= -\frac{2}{z} \left(\frac{1-z^2}{4 \zeta} \right)^{1/4} \left(m^{-2/3} \Ai'(m^{2/3}\zeta) +O\Big( m^{-4/3}  \zeta^{1/2}\Ai(m^{2/3}\zeta)\Big)\right),\\
H_{m}^{(1)}(m z) &=   2\re^{-\pi \ri/3}\left(\frac {4 \zeta }{1-z^2}\right)^{1/4} \left(m^{-1/3} \Ai(\re^{2\pi\ri/3}m^{2/3}\zeta) +O\Big(m^{-5/3}\zeta^{-1/2}\Ai'(\re^{2\pi\ri/3}m^{2/3}\zeta)\Big) \right).
\end{split}\end{equation}
Next, note that when $0<z< 1-\delta$, there exists $c_\delta>0$ such that $\zeta \geq c_\delta$ and thus we can use the asymptotics for Airy functions~\eqref{e:airypos}. Putting these asymptotics in~\eqref{e:jyaibi} and using the definition of $\lambda_m(k)$ \eqref{e:lambda}, we obtain that for any $\delta>0$, there exists $C>0$ such that
$$
\Big| \lambda_m(k)-1\Big|=\Big|\pi k H_{|m|}^{(1)}(k) \Big( \ri J_{|m|}'(k) + J_{|m|}(k)\Big)-1\Big|\leq C\frac{k}{m},\quad \tfor m>(1+\delta)k,
$$
as claimed.
\epf

\subsection{Proof of Theorem~\ref{thm:main1} when $\Gamma$ is the unit circle}
Observe that in the case of the circle, the functional calculus for the surface Laplacian reviewed in Section~\ref{sec:func_calc} is simply the theory of Fourier multipliers; i.e. the collection $\{\frac{1}{\sqrt{2\pi}}\re^{\ri mt}\}_{m=-\infty}^\infty$ is an orthonormal basis of eigenfunctions of $-\Delta_\Gamma$ satisfying 
$$
(-\Delta_\Gamma -m^2)\dfrac{1}{\sqrt{2\pi}}\re^{\ri m t}=0,\qquad \left\|\dfrac{1}{\sqrt{2\pi}}\re^{\ri mt}\right\|_{L^2(\Gamma)}=1.
$$
Thus \eqref{eq:func_calc} becomes
\begin{equation*}
f(-\Delta_\Gamma)v:=\frac{1}{\sqrt{2\pi}}\sum_{m=-\infty}^\infty f(m^2) \widehat{v}_m \re^{\ri mt}.
\end{equation*}
To prove Theorem~\ref{thm:main1}, we only need to check the conditions of Theorem~\ref{thm:abstract} with $I+\pert=2\operator=2A_k$. 
Using Assumption \ref{ass:Vh} as in the beginning of \S\ref{sec:proof}, we see that we only need to prove the bound \eqref{eq:flow1}.
The expansion \eqref{eq:2A_expansion} implies that 
\beqs
\big((2A_k)^{-1} v\big)(t) = \frac{1}{\sqrt{2\pi}}\sum_{m=-\infty}^\infty \big(\lambda_m(k)\big)^{-1} \widehat{v}_m \re^{\ri m t}.
\eeqs
By Theorem~\ref{thm:DGS}, and the fact that $|\lambda_m|\geq |\Re \lambda_m|\geq 1$,
\beq\label{eq:DGScor}
\sup_m |\lambda_m (k)|^{-1}\leq 1.
\eeq
Therefore, by taking $L^2$ norms and using orthonormality (in a similar way to how \eqref{eq:sup_norm_bound} is obtained),  we obtain the bound \eqref{eq:inverse_nontrapping} in this setting
\beq\label{eq:inverse_circle}
\|(2 A_k)^{-1}\|_{\LtGt}\leq \sup_m |\lambda_m (k)|^{-1} 
\leq 1.
\eeq

To prove the bound \eqref{eq:flow1}, we therefore only need to show that 
\beq\label{eq:circle_desired}
\|(2A_k-I)(2A_k)^{-1}\|_{\LtG\to\HoG} \leq C k;
\eeq
we do this using the splitting \eqref{eq:lateral1} with $\chi\in C_{\rm comp}^\infty(\mathbb{R};[0,1])$ with $\chi\equiv 1$ on $[-1-\eps,1+\eps]$. 
To deal the first term on the right-hand side of \eqref{eq:flow1}, we observe that, by~\eqref{eq:inverse_circle}
\beq\label{eq:theend1}
\N{I-(2 A_k)^{-1}}_{\LtGt}\leq 2.
\eeq
The definition of the $H^1$ norm in \eqref{eq:Parseval}, along with the compact support of $\chi$ and Parseval's theorem in \eqref{eq:Parseval}, implies the following analogue of Lemma~\ref{l:basicElliptic} with $s=1$
\beq\label{eq:theend2}
\N{\chi(-k^{-2}\Delta_\Gamma)}_{L^2(\Gamma)\to H^1(\Gamma)}^2\leq \sup_{m} \Big[(1+m^2)\big|\chi(k^{-2}m^2)\big|\Big]\leq C k^2.
\eeq
Combining \eqref{eq:theend1} and \eqref{eq:theend2}, we obtain the following bound on the first term of the right-hand side of \eqref{eq:lateral1}
\begin{equation}
\label{e:lowFreqAgain}
\N{\chi(-k^{-2}\Delta_\Gamma)(I-(2A_k)^{-1})}_{L^2(\Gamma)\to H^1(\Gamma)}\leq Ck.
\end{equation}

To deal the second term on the right-hand side of \eqref{eq:lateral1}, we
observe that
$$
\big(I-\chi(-k^{-2}\Delta_\Gamma)\big)(2A_k - I)(2A_k)^{-1}\re^{\ri mt}=\big(1-\chi(-k^{-2}m^2)\big)\frac{\lambda_m(k)-1}{\lambda_m(k)}\re^{\ri mt}.
$$
Thus, using the Fourier representation of $(1-\chi(-k^{-2}\Delta_\Gamma)\big)(2A_k - I)(2A_k)^{-1}$ and the definition of the $H^1(\Gamma)$ norm \eqref{eq:Parseval}, we find that
\begin{align*}
\N{\big(I-\chi(-k^{-2}\Delta_\Gamma)\big)(2A_k - I)(2A_k)^{-1}}^2_{L^2(\Gamma)\to H^1(\Gamma)}
&\leq \sup_m\bigg[(1+m^2) \big|\big(1-\chi(k^{-2}m^2)\big)\big|\frac{|\lambda_m(k)-1|}{|\lambda_m(k)|}\bigg].
\end{align*}
By the definition of $\chi$, $(1-\chi(k^{-2}m^2))=0$ when $m^2\leq (1+\eps)k^2$, and $(1-\chi(k^{-2}m^2))\leq 1$ for all $m$; therefore 
\begin{align*}
\N{\big(I-\chi(-k^{-2}\Delta_\Gamma)\big)(2A_k - I)(2A_k)^{-1}}^2_{L^2(\Gamma)\to H^1(\Gamma)}
&\leq \sup_{m^2\geq(1+\eps)k^2} \bigg[(1+m^2) 
\frac{|\lambda_m(k)-1|}{|\lambda_m(k)|}\bigg].
\end{align*}

Observe from \eqref{e:lambda} that $\lambda_m(k)= \lambda_{|m|}(k)$; the regime $m^2\geq (1+\eps)k^2$ is therefore exactly that covered by Lemma \ref{lem:lambda_bound1} (with $(1+\eps)^{-1/2}= 1-\delta$). Using Lemma \ref{lem:lambda_bound1}  along with
\eqref{eq:DGScor}, 
we obtain that
\begin{align}\nonumber
\|(I-\chi(-k^{-2}\Delta_\Gamma))(2A_k - I)(2A_k)^{-1}\|^2_{\LtG\to \HoG}&\leq \sup_{m^2\geq (1+\delta)k^2}
\bigg[(1+m^2)\frac{|\lambda_m(k)-1|}{|\lambda_m(k)|}
\bigg]\\
&\hspace{-1cm}\leq C^2 \sup_{m^2\geq (1+\delta)k^2}\bigg[(1+m^2)\frac{k^2}{m^2}\bigg]\leq C'k^2.
\label{e:highFreqAgain}
\end{align}
Combining the bounds \eqref{e:lowFreqAgain} and \eqref{e:highFreqAgain}, we obtain \eqref{eq:circle_desired}, and the proof is complete.

\section*{Acknowledgements}
Both Francesco Andriulli
(Ecole Nationale Sup\'erieure Mines-T\'el\'ecom Atlantique)
 and Th\'eophile Chaumont-Frelet (INRIA, Nice) independently suggested to EAS to look at the particular case of the circle after EAS's talk on \cite{GaMuSp:19} at the conference IABEM 2018. EAS thanks C\'ecile Mailler (University of Bath), Pierre Marchand (INRIA, Paris), and Manas Rachh (Flatiron Institute) for subsequent useful discussions on the circle case. Both JG and EAS thank Alastair Spence (University of Bath) for useful comments on an early draft of the paper and also the anonymous referees for their careful reading of a previous version of the paper.
JG was supported by EPSRC grant EP/V001760/1, and EAS was supported by EPSRC grant EP/R005591/1.

\footnotesize{
\bibliographystyle{plain}
\bibliography{biblio_combined_sncwadditions.bib}
}

\end{document}